\theoremstyle{plain}
\newtheorem{prop}{Proposition}[section]
\newtheorem{conj}[prop]{Conjecture}
\newtheorem{lemm}[prop]{Lemma}
\newtheorem{thm}[prop]{Theorem}
\newtheorem{cor}[prop]{Corollary}
\newtheorem{ex}[prop]{Example}
\theoremstyle{definition}
\newtheorem{defn}[prop]{Definition}
\newtheorem{rem}[prop]{Remark}
\def\mcg#1;#2{\Gamma_{#1,#2}}
\def\fg#1;#2{\Pi_{#1,#2}}
\def\tb#1;#2{\mathscr{K}_{\frac{#1}{#2}}}
\begin{document}

\title[ Alexander   Polynomials of alternating braids ]
{Alexander   Polynomials of closed  alternating braids}

\keywords{braid, alternating  knot,  signature,  Trapezoidal Conjecture}
\author{Mark E. AlSukaiti}
\address{Department of Mathematical Sciences\\ College of Science\\ United Arab Emirates  University \\ 15551 Al Ain, U.A.E.}
\email{700035655@uaeu.ac.ae}

\author{Nafaa Chbili}
\address{Department of Mathematical Sciences\\ College of Science\\ United Arab Emirates  University \\ 15551 Al Ain, U.A.E.}
\email{nafaachbili@uaeu.ac.ae}
\urladdr{http://faculty.uaeu.ac.ae/nafaachbili}

\date{}

\begin{abstract}
We prove that the Alexander polynomials of certain families of  alternating 4-braid knots satisfy Fox's  Trapezoidal Conjecture. Moreover, we give explicit formulas  for the signature and for the first  4 coefficients of the Alexander polynomial for  a large
 family of alternating $n$-braid links and we verify that these 4 coefficients form a log-concave sequence.
\end{abstract}

\maketitle

\section{Introduction}


 An $n$-component link is an embedding of $n$ disjoint circles $\coprod S^{1}$ into the 3-dimensional sphere  $S^3$. A  knot is a one-component link.  A link diagram is a
regular planar  projection of the link together with hight information at each double point showing which of the two strands passes over the other. It is well known that the study of links up to isotopy is equivalent to the study of their regular projections up to Reidemeister moves. A link is said to be alternating if it admits a  diagram where
 the overpass and the underpass alternate as one travels along any strand of the diagram. Alternating links have been subject to extensive study, in particular their polynomial invariants are known to reflect the alternating nature of the diagram in a  clear way.\\
For any integer  $n\geq 2$, we let $B_n$ to denote  the  group of braids on $n$ strands. This group is generated by the elementary braids
$\sigma_{1}, \sigma_{2}, \dots ,\sigma_{n-1}$ subject to the
following relations:
\begin{align*}
\sigma_{i} \sigma_{j} & =\sigma_{j} \sigma_{i} \mbox{ if } |i-j| \geq 2\\
\sigma_{i} \sigma_{i+1} \sigma_{i} & = \sigma_{i+1}
\sigma_{i}\sigma_{i+1}, \ \forall \ 1 \leq i \leq n-2.
\end{align*}

By Alexander's theorem  \cite{Al1}, any   link $L$ in $S^3$
can be represented  as the closure of an $n$-braid $b$. We shall write   $L={\rm{cl}}(b)$. This representation is not unique and the minimum integer $n$ such that $L$ admits an $n$-braid representation
 is called the braid index of $L$. Recall that two braids represent the same link if and only if they are related by a finite sequence  of  Markov moves \cite{MuBook}.

The  Alexander polynomial \cite{Al2} is a topological   invariant of oriented links which associates with each link $L$ a Laurent polynomial with  integral coefficients
  $\Delta_L(t) \in \mathbb{Z}[t^{\pm 1/2}]$. Conway proved that this
polynomial can be defined in  a simple recursive way. The Alexander  polynomial can be also recovered from the Burau representation of the braid group \cite{Bur}.  More recently, Ozsv$\acute{a}$th and  Szab$\acute{o}$ \cite{OS}
 proved that this polynomial, up to the multiplication by some factor,  is   obtained as the Euler characteristic of the  link Floer homology. The Alexander polynomial of a knot $K$ is known to be  symmetric; it satisfies   $\Delta_K(t)=\Delta_K(t^{-1})$. Furthermore,  up to  the multiplication by a power of $t$  it is always  possible to write  $\Delta_K(t)=\displaystyle\sum_{i=0}^{2n}a_it^i$ with  $a_{0}\neq 0$ and  $a_{2n}\neq 0$.  \\
Murasugi \cite{Mu1958a,Mu1958b} studied the Alexander polynomial of alternating knots and proved that the coefficients of this polynomial satisfy  the condition $a_ia_{i+1}<0$ for all $0\leq i < 2n$.
Notice that this means that the Alexander polynomial of an alternating link can always be written in the from  $\Delta_K(t)=\pm \displaystyle\sum_{i=0}^{2n}a_i(-t)^i$, with $a_i>0$. Furthermore, the degree of $\Delta_K(t)$ is equal to twice  the genus of the knot \cite{Mu1958a,Mu1958b}. Fox asked whether one can characterize polynomials which arise as the Alexander polynomial of an alternating knot  \cite{Fo} and  conjectured that the coefficients of the Alexander polynomial of an alternating knot are  trapezoidal. In other words, these coefficients increase, stabilize then decrease in a symmetric way. In  \cite{St2005} Stoimenow conjectured that the coefficients of the Alexander polynomial of an alternating knot  are indeed log-concave. Equivalently,  they satisfy the condition $a_i^2\geq |a_{i+1}||a_{i-1}|$ for $0<i<2n$.  In \cite{Hu}, Huh listed Fox's Trapezoidal Conjecture (Stoimenow's version)   as one of the most interesting open problems about log-concavity. Through the past four  decades,  Fox's conjecture  has been confirmed  for several classes of alternating knots.  In particular, Hartley  proved the conjecture for 2-bridge knots  \cite{Ha}. Murasugi, showed that  the conjecture holds  for a large family of  alternating algebraic knots \cite{Mu1985}. Using knot Floer homology,  Ozsv$\acute{a}$th and  Szab$\acute{o}$ \cite{OS}  confirmed  the conjecture for alternating knots of genus 2. The same result has been obtained by Jong using  combinatorial methods \cite{Jo1,Jo2}. Hirasawa and Murasugi \cite{HM} showed  that the conjecture  holds  for stable alternating knots and suggested a refinement of this conjecture by stating that the length of the stable part  is less than the signature of the knot $\sigma(K)$. The refined form of this conjecture can be stated as follows:

\begin{conj}\label{conj}\cite{Fo,HM}
Let $K$ be an alternating knot and  $\Delta_K(t)=\pm \displaystyle\sum_{i=0}^{2n}a_i(-t)^i$, with $a_i>0$,   its  Alexander polynomial. Then there exists an integer $l \leq n$ such that:
$$a_0<a_1< \dots < a_{n-l/2}=\dots =a_{n+l/2}> \dots ...>a_{2n-1}>a_{2n}.$$ Moreover,  $l\leq |\sigma(K)|$.
\end{conj}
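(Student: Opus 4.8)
The plan is to reduce the conjecture, for the families of closed alternating braids under consideration, to an explicit computation of the Alexander polynomial via the reduced Burau representation, followed by a direct verification of the trapezoidal shape and of the signature bound. First I would fix a normal form for alternating closed $n$-braids. Since a closed braid diagram is alternating precisely when the crossing signs are arranged so that over- and under-passes alternate along every strand, the relevant words split into ``positive'' blocks built from the generators acting on disjoint strand pairs (the $\sigma_1,\sigma_3,\dots$, which pairwise commute) and ``negative'' blocks built from the remaining generators (the $\sigma_2,\sigma_4,\dots$ with inverse exponents). In the $4$-strand case this yields a family $b=\prod_j \sigma_1^{a_j}\sigma_3^{c_j}\sigma_2^{-d_j}$ with all exponents positive, and I would record this family together with its exponent vector as the parameter space.

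Next I would compute $\Delta_{\mathrm{cl}(b)}(t)$ from the reduced Burau representation $\beta_n$, using the identity $\Delta_{\mathrm{cl}(b)}(t)=\pm\, t^{k}\,\det\bigl(\beta_n(b)-I_{n-1}\bigr)/(1+t+\cdots+t^{n-1})$ already recalled in the introduction. The generators $\sigma_1,\sigma_3$ are represented by matrices supported on disjoint blocks, so their contributions multiply independently; this collapses the $3\times 3$ matrix product for $B_4$ into a manageable closed form whose entries are explicit polynomials in $t$ with coefficients depending polynomially (indeed binomially) on the exponents $a_j,c_j,d_j$. Taking the determinant and dividing by $1+t+t^2+t^3$ then produces $\Delta$ with coefficients expressed as symmetric, convolution-type sums in the exponents.

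With these formulas in hand I would establish the conclusions in stages. Using Murasugi's sign-alternation $a_ia_{i+1}<0$ together with the symmetry $\Delta_K(t)=\Delta_K(t^{-1})$, it suffices to analyze the nonnegative coefficients $a_i$ on the first half $0\le i\le n$. For the leading coefficients $a_0,a_1,a_2,a_3$ I would verify the log-concavity inequalities $a_i^2\ge a_{i-1}a_{i+1}$ directly; since these coefficients are elementary-symmetric or binomial expressions in the exponent data, the inequalities reduce to standard log-concavity of such combinatorial sequences (for instance via Newton's inequalities). For the full trapezoidal shape of the $4$-braid knots I would show that the coefficient sequence read off from the explicit formula is unimodal with a symmetric flat top, by comparing consecutive coefficients across the whole range.

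The hard part will be the signature bound $l\le|\sigma(K)|$. This requires an independent closed-form computation of $\sigma(K)$ for the same braid family, most naturally through the Gordon--Litherland form of the alternating spanning surface or through Murasugi's recursive signature formula for alternating links, and then matching the resulting value against the length $l$ of the stable part extracted from the coefficient formula. I expect the combinatorial bookkeeping relating the exponent vector simultaneously to $l$ and to $\sigma$ to be the principal obstacle, and to be precisely the reason the argument does not immediately propagate from the first four coefficients (obtainable for all $n$) to the full trapezoidal statement beyond the $4$-braid families.
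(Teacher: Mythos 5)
The statement you were asked to prove is Fox's Trapezoidal Conjecture in its refined Hirasawa--Murasugi form, which concerns \emph{all} alternating knots. This is a long-standing open problem; the paper does not prove it either---it records it as a conjecture and only \emph{confirms} it for two specific families of closed alternating $4$-braids (one block, $\sigma_1^p\sigma_2^{-q}\sigma_3^r$, and two blocks). Your proposal has the same limitation but does not acknowledge it: your first step replaces ``alternating knot'' by ``closed alternating braid in a fixed normal form,'' and most alternating knots do not arise as closures of braids of the form $\prod_j \sigma_1^{a_j}\sigma_3^{c_j}\sigma_2^{-d_j}$. So as a proof of the statement as given, the proposal fails at the outset, and no amount of Burau computation can repair that; at best it is a plan for the special cases the paper treats.

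Even judged against those special cases, the proposal leaves the two genuinely hard steps as declarations of intent rather than arguments. Where you propose to verify unimodality ``by comparing consecutive coefficients across the whole range,'' the paper avoids exactly this brute-force comparison: it first puts the polynomial in factored form---$\Delta_L(s)=[p][q][r]$ for one block (the closure is a connected sum of torus links $T(2,p)$, $T(2,-q)$, $T(2,r)$), and $\Delta_L(s)=s[q_1][q_2]\left([p_1][p_2][r_1+r_2]+[r_1][r_2][p_1+p_2]\right)+[p_1+p_2][q_1+q_2][r_1+r_2]$ for two blocks---and then invokes structural lemmas: the product of a symmetric trapezoidal polynomial with $[n]$ is trapezoidal with an explicitly computable stable length, and a sum of two symmetric trapezoidal polynomials sharing a center is trapezoidal with stable length the minimum of the two. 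These lemmas are what deliver the stable length $l$ in closed form, which in turn makes the comparison with $|\sigma(L)|$ (computed in the paper by a Seifert-matrix argument plus an induction on blocks using the Manolescu--Ozsv\'ath skein relation for the signature---close in spirit to your Gordon--Litherland suggestion) a short explicit check rather than the open-ended ``combinatorial bookkeeping'' your plan defers. Without the factorization and these two lemmas, your outline contains no mechanism for establishing either the trapezoidal shape or the bound $l\le|\sigma(K)|$.
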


In  \cite{Ch}, Chen  proved   that this refinement holds for  two-bridge knots. Alrefai and the second author \cite{AC} showed that certain families of 3-braid knots satisfy Conjecture \ref{conj}. In \cite{AlC,SC}, the authors gave  explicit formulas for the   coefficients of the Alexander polynomial of weaving knots and proved that they satisfy the conditions of Conjecture \ref{conj}.
See also the  more recent results in \cite{HMV}. The main purpose of this paper is to confirm the conjecture for some families of alternating 4-braid knots.\\

Consider the alternating $n$-braid on $m$ blocks
$$\beta_{n,m}((p_{i,j}))=\Pi_{j=1}^m \left( \sigma_1^{p_{1,j}} \sigma_2^{-p_{2,j}} \sigma_3^{p_{3,j}} ... \sigma_{n-1}^{(-1)^{n-2} p_{{n-1},j}} \right),$$ where all $p_{i,j}>0$ or $p_{i,j}<0$  for all $i,j$. Obviously, the closure of $\beta_{n,m}((p_{i,j}))$ is an alternating link. It is worth mentioning here that this family of links contains the classes of weaving and  generalized hybrid weaving links whose quantum  invariants have been discussed in \cite{SC,SMR}. 

In this paper, we shall calculate the signature of  ${\rm{cl}}(\beta_{n,m}((p_{i,j})))$. Then, by studying  the Burau representation of the braid group, we shall check that for $m=1,2$ and $n=4$, the  Alexander polynomial of such closed braids satisfy Conjecture \ref{conj}. Moreover, we give explicit formulas for the first 4 coefficients of the Alexander polynomial of the link ${\rm{cl}}(\beta_{n,m}((p_{i,j})))$.

Here is an outline of this paper. In Section 2, we compute the signature of the closure of the braid $\beta_{n,m}(p_{i,j})$.  In Section 3,  we briefly recall the  Burau representation of the group $B_4$. In Section 4, we shall prove   that  certain classes of 4-braid links satisfy Conjecture \ref{conj}. Finally, in Section 5 we give explicit formulas for the first 4 coefficients of the Alexander
polynomial  of alternating $n$-braid links.
\parindent 0cm

\section{The signature of closed alternating braids}

This section  aims to calculate the signature of links that arise as the closure of certain  alternating braids on $n$ strands. Let us first prove the following lemma.

\begin{lemm}\label{matrixA}
	Let $A_n = \begin{bmatrix}
		2 	& -1 	& 0 	& ... & 0\\
		-1 	& 2 	& -1 	& \ddots	& \vdots\\
		0 	& \ddots& \ddots& \ddots& 0\\
		\vdots&	\ddots	& 		&		&  \\
		0	& ... 	&  0  	& -1 	& 2
	\end{bmatrix}$, be a matrix of order $n\times n$. Then $A_n$ has $n$ positive eigenvalues.
\end{lemm}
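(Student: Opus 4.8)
The plan is to prove that $A_n$ is symmetric positive definite. Since a symmetric matrix has only real eigenvalues and a positive definite symmetric matrix has only \emph{positive} eigenvalues, this immediately yields the claim that $A_n$ has $n$ positive eigenvalues. The problem thus reduces to showing that the associated quadratic form $q(x) = x^{T} A_n x$ is strictly positive for every nonzero vector $x = (x_1, \dots, x_n) \in \mathbb{R}^{n}$.

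The key step is the sum-of-squares identity
\begin{equation*}
x^{T} A_n x = x_1^2 + x_n^2 + \sum_{i=1}^{n-1}(x_i - x_{i+1})^2,
\end{equation*}
which I would verify by expanding the right-hand side. Each squared difference contributes $x_i^2 + x_{i+1}^2 - 2x_i x_{i+1}$, so the sum supplies the full off-diagonal contribution $-2\sum_i x_i x_{i+1}$ of $A_n$ and, after adding the two boundary terms $x_1^2$ and $x_n^2$, produces the diagonal contribution $2\sum_i x_i^2$ exactly as prescribed by the entries of $A_n$. The right-hand side is manifestly nonnegative, and it vanishes only when $x_1 = x_n = 0$ and $x_i = x_{i+1}$ for every $i$, which forces $x = 0$. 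Hence $A_n$ is positive definite and all $n$ of its eigenvalues are positive.

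As a robust alternative that avoids having to guess the identity, one can instead invoke Sylvester's criterion. Every leading principal submatrix of $A_n$ is again tridiagonal of the same shape, namely $A_k$ for $1 \le k \le n$, so it suffices to check $\det A_k > 0$ for all $k$. Expanding $\det A_k$ along its last row gives the recursion $\det A_k = 2\det A_{k-1} - \det A_{k-2}$ with initial data $\det A_1 = 2$ and $\det A_2 = 3$, whose solution is $\det A_k = k+1$. Since every leading principal minor equals $k+1 > 0$, Sylvester's criterion again yields positive definiteness, and hence positivity of all eigenvalues.

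This is a classical fact about the $(-1,2,-1)$ discrete Laplacian, whose eigenvalues are in fact explicitly $2 - 2\cos\!\big(\tfrac{k\pi}{n+1}\big)$ for $k = 1, \dots, n$, each lying in the interval $(0,4)$. There is no genuine obstacle here; the only work is the routine bookkeeping in either the quadratic-form identity or the determinant recursion. I would favour the quadratic-form argument, since it is self-contained and delivers \emph{strict} positivity in one stroke without a separate nondegeneracy check.
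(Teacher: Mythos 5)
Your proposal is correct, and in fact contains two complete proofs. Your ``robust alternative'' --- Sylvester's criterion applied to the leading principal minors, computed via the recursion $\det A_k = 2\det A_{k-1} - \det A_{k-2}$ with $\det A_1 = 2$, $\det A_2 = 3$, giving $\det A_k = k+1$ --- is precisely the paper's own proof: the authors establish $|A_n| = n+1$ by induction on exactly this recursion and then observe that the principal submatrices of $A_n$ are the $A_r$, $r \le n$, so positive definiteness follows. Your preferred argument, the sum-of-squares identity
\begin{equation*}
x^{T} A_n x = x_1^2 + x_n^2 + \sum_{i=1}^{n-1}(x_i - x_{i+1})^2,
\end{equation*}
is a genuinely different route, and the identity checks out: the squared differences account for the off-diagonal terms $-2\sum_i x_i x_{i+1}$ and contribute $2x_i^2$ for each interior index, while the boundary terms $x_1^2 + x_n^2$ complete the diagonal. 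Vanishing forces $x_1 = 0$ and $x_i = x_{i+1}$ for all $i$, hence $x = 0$, so strict positivity is immediate. What each approach buys: your quadratic-form argument is self-contained (no appeal to Sylvester's criterion, no induction, no determinant bookkeeping) and exhibits positive definiteness directly; the paper's argument requires the external fact that positive leading principal minors imply positive definiteness, but yields the extra quantitative information $\det A_n = n+1$ along the way. Either is perfectly adequate for the role the lemma plays in the signature computation, where only the count of positive eigenvalues of the blocks $\pm A_{p_k - 1}$ is needed.
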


\begin{proof}
	We will prove the lemma  by showing that  $A_n$ is positive definite.
	
	Notice that $|A_1|= 2$ and $|A_2| = \begin{vmatrix}
		2 & -1 \\
		-1 &  2
	\end{vmatrix} = 3$.
	
	We now proceed by induction, assume that $|A_n| = n+1$ and $|A_{n-1}| = n$, then:
	\begin{align*}
		|A_{n+1}| &= \left|\begin{tabular}{c c c : c}
			& 		& 		& 0\\
			& $A_n$ 	& 		& \vdots \\
			&		& 		& -1 	\\
			\hdashline
			0 	& ... 	& 	-1	& 2
		\end{tabular}\right| \\
		&= 2 |A_n| + \left|\begin{tabular}{c c c : c}
			& 		& 		& 0\\
			& $A_{n-1}$ 	& 		& \vdots \\
			&		& 		& 0 	\\
			\hdashline
			0 	& ... 	& 	-1	& -1
		\end{tabular}\right| \\
		&= 2 |A_n| - |A_{n-1}| + \left|\begin{tabular}{c c c : c}
			& 		& 		& 0\\
			& $A_{n-2}$ 	& 		& \vdots \\
			&		& 		& 0 	\\
			\hdashline
			0 	& ... 	& 	-1	& 0
		\end{tabular}\right|\\
		&= 2(n+1) - n\\
		&= n+2.
	\end{align*}
	
	Therefore  $|A_n| = n+1$ and since the principle sub-matrices of $A_n$ are just $A_r$ for $r\leq n$ we get that  all principle sub-matrices of $A_n$ have  positive determinants.
 Thus,   $A_n$ is positive definite. So it  has $n$ positive eigenvalues.
\end{proof}

\begin{thm}
	Let $K$ be the closure of the $n$-braid $\beta_{n,m}((p_{i,j}))$, where either all $p_{i,j}>0$ or $p_{i,j}<0$  for all $i,j$.  Then the signature of $K$ is given by the following formulas.
\begin{enumerate}
\item If $n$ is odd, then
	$$\sigma(K) = \sum_{j=1}^m\sum_{i=1}^{\lfloor \frac{n}{2} \rfloor} p_{2i,j} - \sum_{j=1}^m\sum_{i=1}^{\lfloor \frac{n}{2}\rfloor} p_{2i-1,j}. $$
\item If $n$ is even and $p_{i,j}>0$, then
	$$\sigma(K) = \sum_{j=1}^m\sum_{i=1}^{\frac{n}{2}-1} p_{2i,j} - \sum_{j=1}^m\sum_{i=1}^{\frac{n}{2}} p_{2i-1,j} \ +1.$$
\item If n is even and $p_{i,j}<0$, then
	$$\sigma(K) = \sum_{j=1}^m\sum_{i=1}^{\frac{n}{2}-1} p_{2i,j} - \sum_{j=1}^m\sum_{i=1}^{\frac{n}{2}} p_{2i-1,j} \ -1.$$
\end{enumerate}
\end{thm}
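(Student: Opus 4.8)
The plan is to realize $K$ as the boundary of the Seifert surface $\Sigma$ produced by Seifert's algorithm applied to the given alternating braid diagram, and to compute $\sigma(K) = \sign(V + V^{T})$ for a Seifert matrix $V$. Since the Seifert circles of a closed braid on $n$ strands are just the $n$ braid strands, $\Sigma$ consists of $n$ disks $D_{1}, \dots, D_{n}$ together with one half-twisted band for each letter $\sigma_{i}^{\pm 1}$ of the word; the bands joining $D_{i}$ and $D_{i+1}$ form the $i$-th \emph{column}. Writing $Q_{i} = \sum_{j=1}^{m} |p_{i,j}|$ for the number of such bands, a basis of $H_{1}(\Sigma)$ is given by the loops $x_{i,1}, \dots, x_{i,Q_{i}-1}$, where $x_{i,\ell}$ runs up the $\ell$-th band and down the $(\ell+1)$-st band of column $i$. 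I group these generators by the column index $i \in \{1, \dots, n-1\}$.

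First I would establish the block structure of $G := V + V^{T}$ in this basis. Two generators $x_{i,\ell}$ and $x_{i',\ell'}$ can link only when the band pairs they traverse meet, so $G$ is block tridiagonal in the column index: the $(i,i')$ block vanishes whenever $|i-i'|\geq 2$, because then the loops lie in disjoint pairs of disks. The crucial point is the diagonal blocks. All crossings in a fixed column $i$ share the sign $(-1)^{i-1}\sign(p_{i,j})$, since the word uses $\sigma_{i}^{(-1)^{i-1}p_{i,j}}$; hence the diagonal entries equal $\mp 2$, the consecutive off-diagonal entries equal $\pm 1$ with a single sign determined by the column, and non-consecutive generators in the same column are unlinked. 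Therefore the $i$-th diagonal block is $\epsilon_{i} A_{Q_{i}-1}$, where $A$ is the tridiagonal matrix of Lemma \ref{matrixA} and $\epsilon_{i}=\pm 1$; this uses only the common crossing sign, not the fact that the $p_{i,j}$-groups are contiguous in the word. By Lemma \ref{matrixA} each $A_{Q_{i}-1}$ is positive definite, so the $i$-th diagonal block is definite of signature $\epsilon_{i}(Q_{i}-1)$, and the signs $\epsilon_{i}$ alternate with $i$.

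The heart of the argument is a block Schur reduction showing that the off-diagonal blocks do not affect the signature. Processing the columns from $i=1$ to $n-1$ and taking successive Schur complements, I claim each reduced diagonal block keeps the definiteness sign of $\epsilon_{i}A_{Q_{i}-1}$. Indeed, if the current pivot block is negative definite, eliminating it adds a positive semidefinite correction $B^{T}(-N)^{-1}B$ to the next block, so a positive definite block stays positive definite; symmetrically, a positive definite pivot subtracts a positive semidefinite correction and leaves the next negative definite block negative definite. The alternation of the $\epsilon_{i}$ is exactly what makes each correction push in the harmless direction, independently of the precise coupling blocks $B_{i}$. Consequently $G$ is congruent to $\bigoplus_{i=1}^{n-1}\epsilon_{i}A_{Q_{i}-1}$, whence
$$\sigma(K) = \sum_{i=1}^{n-1}\epsilon_{i}\,(Q_{i}-1).$$

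Finally I would carry out the sign and parity bookkeeping. Splitting $\epsilon_{i}(Q_{i}-1)=\epsilon_{i}Q_{i}-\epsilon_{i}$ and using $\epsilon_{i}=(-1)^{i}\sign(p_{i,j})$ together with $\sign(p_{i,j})\,Q_{i}=\sum_{j}p_{i,j}$, so that $\epsilon_{i}Q_{i}=(-1)^{i}\sum_{j}p_{i,j}$, the leading sum becomes $\sum_{i=1}^{n-1}(-1)^{i}\sum_{j}p_{i,j}=\sum_{j}\sum_{i}p_{2i,j}-\sum_{j}\sum_{i}p_{2i-1,j}$, the common main term of all three formulas. The correction is $-\sum_{i=1}^{n-1}\epsilon_{i}=-\sign(p_{i,j})\sum_{i=1}^{n-1}(-1)^{i}$; since $\sum_{i=1}^{n-1}(-1)^{i}$ equals $0$ for $n$ odd and $-1$ for $n$ even, this correction vanishes in case (1) and equals $\sign(p_{i,j})=\pm 1$ in cases (2) and (3), matching the stated formulas. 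The main obstacle is the Schur-complement step: one must verify that the diagonal blocks are genuinely $\pm A_{Q_{i}-1}$ (so that Lemma \ref{matrixA} applies) and that their alternating definiteness survives the reduction regardless of the coupling blocks; once this sign-propagation is in hand, the three formulas reduce to the elementary count above.
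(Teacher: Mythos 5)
Your proposal is correct, but it takes a genuinely different route from the paper. The paper argues by induction on the number of blocks $m$: the base case $m=1$ is a connected sum of torus links, where the symmetrized Seifert matrix is honestly block \emph{diagonal} with blocks $\pm A_{p_k-1}$, and the inductive step never touches the Seifert matrix again --- instead it smooths crossings one column at a time and invokes the Manolescu--Ozsv\'ath relation $\sigma(L_+)=\sigma(L_v)-1$ (valid here because the smoothings stay alternating and non-split, so $\det(L_+)=\det(L_v)+\det(L_h)$ with both terms positive), together with mirror-image tricks to handle the even columns. You instead work with the single Seifert surface of the full $m$-block braid, accept that adjacent columns are genuinely coupled (they are: e.g.\ for the figure-eight knot $\sigma_1\sigma_2^{-1}\sigma_1\sigma_2^{-1}$ the matrix is $\left(\begin{smallmatrix}-2&-1\\-1&2\end{smallmatrix}\right)$), and kill the coupling by Haynsworth inertia additivity: eliminating a negative definite pivot adds $B^{T}(-N)^{-1}B\succeq 0$ to the adjacent positive definite block, and block tridiagonality ensures each elimination only perturbs the next block, so the alternating definiteness propagates and $\sigma(K)=\sum_i\epsilon_i(Q_i-1)$ regardless of the coupling blocks. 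Your bookkeeping at the end is right and reproduces all three formulas (and both of the paper's numerical examples). What your approach buys is self-containedness and uniformity: pure linear algebra plus Lemma \ref{matrixA}, no quasi-alternating/Floer-theoretic input, no induction on $m$, and no mirror-image case analysis. What the paper's approach buys is that it never has to justify the inter-column structure of the Seifert form at all --- which is the one place where your write-up is asserted rather than proved: you should verify the linking-number claims, namely that non-consecutive loops within a column are unlinked and that the within-column blocks are exactly $\epsilon_iA_{Q_i-1}$ even when the column's bands are interleaved with other blocks' bands along the disks (true, because each loop bounds a disk in the empty region between its two bands, missing everything else), and that columns with $|i-i'|\geq 2$ are unlinked (disjointness of the disks alone is not a proof, though the same bounding-disk argument settles it). These verifications are at the same level of detail as what the paper itself asserts in its $m=1$ base case, so your argument stands as a valid and arguably cleaner alternative proof.
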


\begin{ex}
	If $K$ is the closure of the braid $\sigma_1 ^ 3 \sigma_2^{-2} \sigma_3^5 \in B_4$, then the signature of $K$ is $\sigma(K) = 2 - (3+5) +1 = -5$. While, if
	 $K$ is the closure of the braid $\sigma_1 ^ 3 \sigma_2^{-4} \sigma_3^2 \sigma_4^{-9} \sigma_1 ^ 2 \sigma_2^{-1} \sigma_3^3 \sigma_4^{-8}  \in B_5$, then the signature of $K$ is $\sigma(K) = 4+9+1+8 - (3+2+2+3)= 12$.
\end{ex}

\begin{proof}
	We would prove this by using induction on the number of blocks $m$. Let $m=1$ and    $K$ be the knot we obtain from the closure of the braid $\sigma_1^{p_{1}} \sigma_2^{-p_{2}} \sigma_3^{p_{3}} ... \sigma_{n-1}^{(-1)^{n-2} p_{{n-1}}}$ such that $p_i > 0$ for all $i$, see Figure \ref{Braidknot1}.
	Let $S$ be the Seifert Surface of $K$ obtained through Seifert's algorithm and let $G$ be  the associated Seifert Graph \cite{MuBook}. Then $S$ and $G$ are  as shown in Figure \ref{SeifertSurface}. Notice that $G$ has $\sum_{i=1}^{n} (p_i -1)$ faces excluding the face that contains the point infinity. These faces  correspond to the loops $\gamma_{i,j}$ for $1 \leq i \leq n$ and $1 \leq j \leq p_i -1$ as shown in the Figure \ref{SeifertSurface}.
We have the following notable linking numbers:
	\begin{gather*}
		lk (\gamma_{i,j}, \gamma_{i,j}^*) = -1 \text{ for odd $i$,}\\
		lk (\gamma_{i,j}, \gamma_{i,j+1}^*) = 1 \text{ for odd $i$,}\\
		lk (\gamma_{i,j}, \gamma_{i,j}^*) = 1 \text{ for even $i$,}\\
		lk (\gamma_{i,j}, \gamma_{i,j+1}^*) = -1 \text{ for even $i$.}
	\end{gather*}
	
	Any other linking number is $0$.
	
	\begin{figure}[h]
		\centering
		\begin{minipage}{0.3\textwidth}
			\begin{center}
				\includegraphics[width =.9\linewidth]{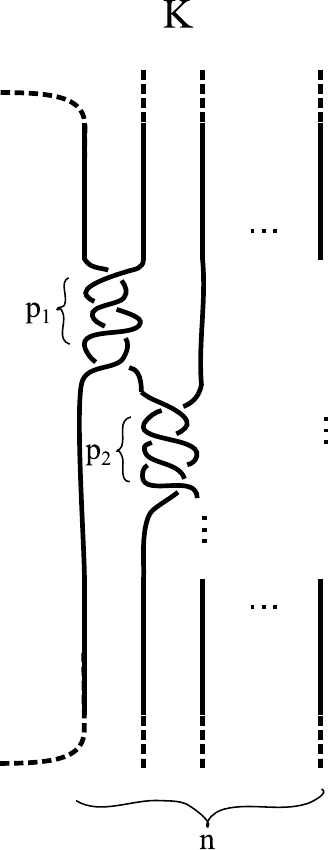}
			\end{center}
			\caption{Knot $K$ as a closure of the braid  $\Pi_{i=1}^{n-1} \sigma_i^{(-1)^{i-1} p_{i}}$. }
			\label{Braidknot1}
		\end{minipage}
		\hspace{1cm}
		\begin{minipage}{0.6\textwidth}
			\centering
			\includegraphics[width=\linewidth]{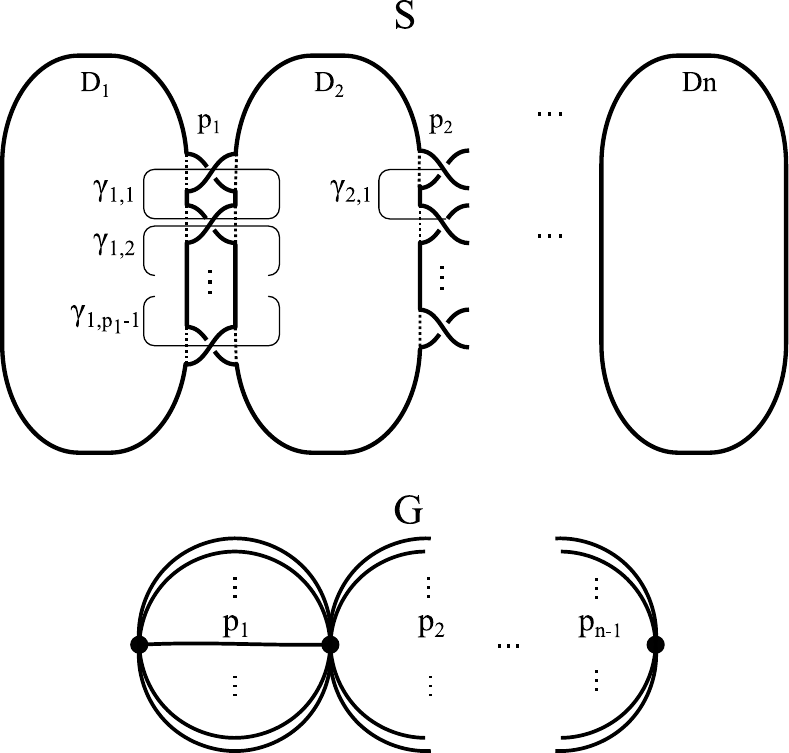}
			\caption{Seifert Surface $S$ of $K$, and its associated Seifert Graph $G$.}
			\label{SeifertSurface}
		\end{minipage}	
	\end{figure}

	If we define $A_n$ to be the $n\times n$ matrix as described in Lemma \ref{matrixA} and $M$ to be the Seifert matrix of $K$ then we have the following:
	
	$$ M + M^T= \begin{bmatrix}
		- A_{p_1 -1} & \\
		& A_{p_2 -1} & \\
		& & \ddots \\
		& & & (-1)^{n-1}A_{p_{n-1} -1}
	\end{bmatrix}.$$
	
	That is $M + M^T$ is a block matrix with the matrix $(-1)^{k}A_{p_k -1}$ on its main diagonal. Thus, the eigenvalues of $M + M^T$ are the eigenvalues of the respective $(-1)^{k}A_{p_k -1}$. Therefore,  $M + M^T$ has $\sum_{i = 1}^{\lfloor \frac{n}{2} \rfloor} (p_{2i}-1) $ positive eigenvalues and $\sum_{i = 1}^{\lfloor \frac{n}{2} \rfloor} (p_{2i-1}-1) $ negative eigenvalues if $n$ is odd, and $\sum_{i = 1}^{\frac{n}{2}-1} (p_{2i}-1) $ positive eigenvalues and $\sum_{i = 1}^{\frac{n}{2}} (p_{2i-1}-1) $ negative eigenvalues if $n$ is even. Thus proving our base case in the case $p_i>0$. Notice that the case $p_i<0$ is settled simply by taking the mirror image.\\
	\begin{rem}  It is worth mentioning here that the formula above for the signature  can be obtained by considering the link $K$
as a connected sum the  torus links $T(2,(-1)^{i-1} p_{i})$. This link has  signature $(-1)^{i} (p_{i}-1)$ if $p_{i}>0$ and $(-1)^{i} (p_{i}+1)$ if $p_{i}<0$. Recall that the
 signature of a connected sum of links is simply the sum of the signatures of these links.
	\end{rem}
Assume that   $n$ is  even and that  $p_{i,j}>0$ for all $i,j$.  Recall that  $\beta_{n,m-1}((p_{i,j}))$ stands for  the $n$-braid on $m-1$ blocks
 $$\Pi_{j=1}^{m-1} \left( \sigma_1^{p_{1,j}} \sigma_2^{-p_{2,j}} \sigma_3^{p_{3,j}} ... \sigma_{n-1}^{(-1)^{n-2} p_{{n-1},j}} \right).$$
	As our induction assumption, we assume that $$\sigma({\rm cl}(\beta_{n,m-1}((p_{i,j})))) = \sum_{j=1}^{m-1}\sum_{i=1}^{\frac{n}{2}-1} p_{2i,j} - \sum_{j=1}^{m-1}\sum_{i=1}^{\frac{n}{2}} p_{2i-1,j} \ +1.$$
	
Let $L$ be the alternating link obtained from the closure of the braid
$$
	\beta_{n,m}((p_{i,j}))=\beta_{n,m-1}((p_{i,j})) \left( \sigma_1^{p_{1,m}} \sigma_2^{-p_{2,m}} \sigma_3^{p_{3,m}} ... \sigma_{n-1}^{(-1)^{n-2} p_{{n-1},m}} \right)$$
with $m\geq 2$.  Let $c$ be a positive crossing of $L$ corresponding to an  elementary braid $\sigma_{n-1}$ in the last block. If  we smooth   $L=L_+$ at the crossing $c$,  then
 the resulting links $L_{v}$ and $L_{h}$   still remain alternating  and non-split, see Figure \ref{altpres}.
Consequently,  the links $L=L_{+}$, $L_{v}$ and  $L_{h}$  satisfy the conditions  $\det(L_v)>0, \det(L_h) >0$, and $\det(L_+) = \det(L_v) + \det(L_h)$.
Therefore,  by a result of   Manolescu and  Ozsv$\acute{a}$th \cite{MO}, we have:
	$$ \sigma(L_+) = \sigma(L_v) -1.$$

	\begin{figure}[h]
		\begin{center}
			\includegraphics[width = 0.8\linewidth]{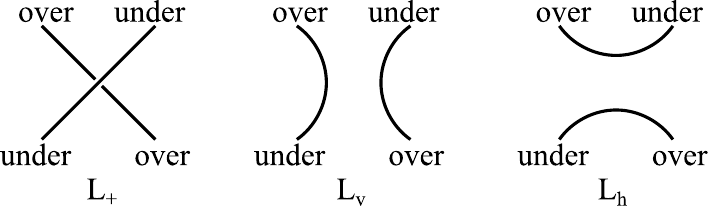}
		\end{center}
		\caption{Both $L_v$ and $L_h$ preserve the alternating property of the  diagram.}\label{altpres}
	\end{figure}
	Notice that  the relation above  holds for all crossings associated to $\sigma_{n-1}^{p_{n-1,m}}$. This implies that:
	$$ \sigma(L_+) = \sigma\left({\rm cl}\left(  \beta_{n,m-1}((p_{i,j})) \left( \sigma_1^{p_{1,m}} \sigma_2^{-p_{2,m}} \sigma_3^{p_{3,m}} ... \sigma_{n-2}^{(-1)^{n-3} p_{{n-2},m}} \right)   \right)\right) -p_{n-1,m}. $$
	
	Similar logic holds for all odd values of $i<n-1$. That is:
	$$ \sigma(L_+) = \sigma\left({\rm cl}\left(\beta_{n,m-1}((p_{i,j})) \left( \sigma_2^{-p_{2,m}} \sigma_4^{-p_{4,m}} ... \sigma_{n-2}^{(-1)^{n-3} p_{{n-2},m}} \right) \right)\right) -\sum_{i=1}^{\frac{n}{2}} p_{2i-1,m}. $$
	
	To find the signature of the link $K = {\rm cl}\left(\beta_{n,m-1}((p_{i,j})) \left( \sigma_2^{-p_{2,m}} \sigma_4^{-p_{4,m}} ... \sigma_{n-2}^{(-1)^{n-3} p_{{n-2},m}} \right) \right)$ we shall look at its mirror. Let $K^*$ be the mirror image of $K$ and $\beta_{n,m-1}((p_{i,j}))^*$ be the mirror of $\beta_{n,m-1}((p_{i,j}))$. Then we can apply the same logic above to get the following:
	\begin{align*}
		\sigma(K^*) &= \sigma \left( {\rm cl} \left(\beta_{n,m-1}((p_{i,j}))^* \right)\right) - \sum_{i=1}^{\frac{n}{2}-1} p_{2i,m} \\
		&= \sum_{j=1}^{m-1}\sum_{i=1}^{\frac{n}{2}-1} -p_{2i,j} - \sum_{j=1}^{m-1}\sum_{i=1}^{\frac{n}{2}} -p_{2i-1,j} \ -1 - \sum_{i=1}^{\frac{n}{2}-1} p_{2i,m}\\
		&= \sum_{j=1}^{m}\sum_{i=1}^{\frac{n}{2}-1} -p_{2i,j} - \sum_{j=1}^{m-1}\sum_{i=1}^{\frac{n}{2}} -p_{2i-1,j} \ -1.
	\end{align*}
	Finally getting:
	\begin{align*}
		\sigma(L_+) &= \sum_{j=1}^{m}\sum_{i=1}^{\frac{n}{2}-1} p_{2i,j} - \sum_{j=1}^{m-1}\sum_{i=1}^{\frac{n}{2}} p_{2i-1,j} \ +1 -\sum_{i=1}^{\frac{n}{2}} p_{2i-1,m}\\
		&= \sum_{j=1}^{m}\sum_{i=1}^{\frac{n}{2}-1} p_{2i,j} - \sum_{j=1}^{m}\sum_{i=1}^{\frac{n}{2}} p_{2i-1,j} \ +1.
	\end{align*}
	
	Thus completing our induction and proving our formula in  case $n$ is even. Notice that  to prove the formula  in  the case $p_{i,j}<0$ we just have to look at the mirror image of the link. A similar induction argument  could be used  to show that the theorem holds for odd values of $n$ as well.
\end{proof}

\begin{cor}
	Let $L$ be the closure of the 4-braid $\Pi_{i=1}^{m} \sigma_1^{p_i} \sigma_2^{-q_i} \sigma_3^{r_i}$, with $p_i, q_i, r_i,m >0$, then $\sigma(L)=Q - P - R +1$ where $Q = \sum_{i=1}^{m} q_i, P = \sum_{i=1}^{m} p_i,$ and $R = \sum_{i=1}^{m} r_i$.
\end{cor}
\section{The Alexander Polynomial and the Burau representation}
The Alexander polynomial \cite{Al2} is  a classical knot invariant which is of central importance to  knot theory. This topological  invariant  of oriented links  is a one-variable Laurent polynomial with integral coefficients
 $\Delta_L(t)$ which can  be defined in several different, but equivalent, ways.  In particular, it  is uniquely determined   recursively   using Conway skein relations:
$$\begin{array}{l}
\Delta_U(t)=1,\\
\Delta_{L_+}(t) - \Delta_{L_-}(t)= (\sqrt{t}-\displaystyle\frac{1}{\sqrt{t}}) \Delta_{L_0}(t),
\end{array}$$
where $U$ is the unknot and  $L_{+}$, $L_{-}$ and  $L_{0}$ are three oriented link diagrams
which are identical except in a small region  where they are as pictured in Figure \ref{figure1}.
\begin{figure}[h]
\centering
\includegraphics[width=10cm,height=2.5cm]{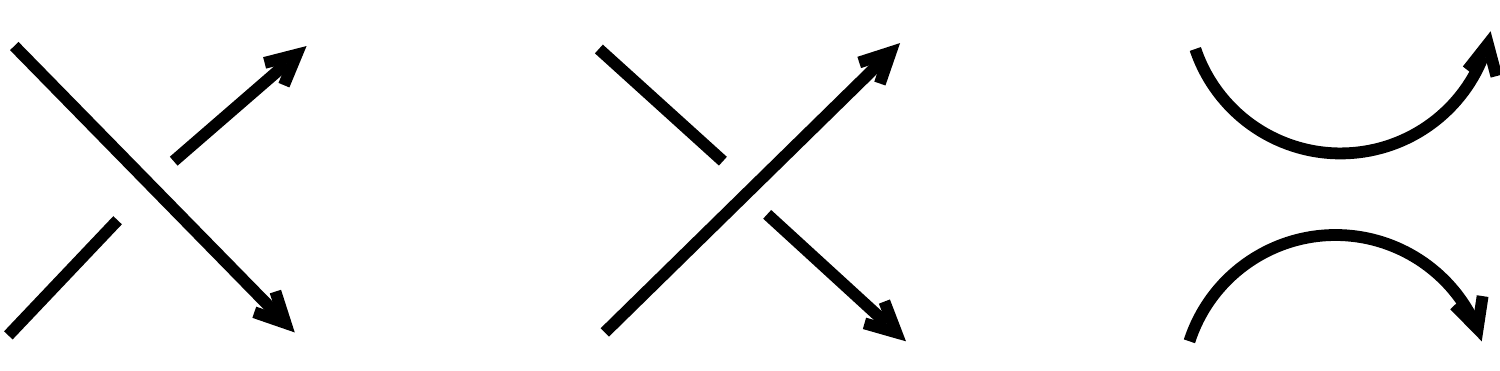}
\caption{aThe three oriented links  $L_+$, $L_{-}$ and  $L_0$,  respectively.}
\label{figure1}
\end{figure}
 This polynomial is known to be symmetric, in the sense that it satisfies  $\Delta_L(t)= \Delta_L(t^{-1})$. Another way to define the Alexander polynomial is through
 the reduced Burau representation of the braid group \cite{Bur}. Since we are only interested in 4-braid links, we shall restrict ourselves to the Burau representation of  the braid group   $B_4$. This representation is defined as follows. Let   $b$ be a given 4-braid and  $e_b$  the exponent sum of $b$ as a word in the
elementary braids $\sigma_1$, $\sigma_2$ and $\sigma_3$.   Let $\psi_{4,t}: B_{4}
\longrightarrow GL(3, \mathbb{Z}[t,t^{-1}])$ be the reduced  Burau
representation defined on the generators of $B_4$ by:

$\psi_{4,t}(\sigma_1 )$ = $\begin{pmatrix}
		-t & 1  & 0 \\
		0 & 1 & 0 \\
		0 & 0 & 1
	\end{pmatrix}$, $\psi_{4,t}(\sigma_2 )$ = $\begin{pmatrix}
		1 & 0 & 0 \\
		t & -t & 1 \\
		0 & 0 & 1
	\end{pmatrix}$ and $\psi_{4,t}(\sigma_3)$ = $\begin{pmatrix}
		1 & 0 & 0 \\
		0 & 1 & 0 \\
		0 & t& -t
	\end{pmatrix}.$

The Alexander polynomial, up to the multiplication by a unit of  $\mathbb{Z}[t^{1/2},t^{-1/2}]$,  of the link $L={\rm cl}(b)$ is obtained from the Burau representation  by the following formula:
$$\Delta_L(t)=(\frac{-1}{\sqrt{t}})^{e_b-3}\frac{1-t}{1-t^4}\mbox{det}(I_3-\psi_{4,t}(b)),$$
where $I_3$ is the $3\times 3$ identity matrix.

Recall that the coefficients of the  Alexander polynomial of an alternating knot  alternate in sign \cite{Mu1958b}. Throughout the rest of this paper,   we find it more convenient
  to make the substitution   $s=-t$. Hence, if $K$ is alternating then we can write   $\Delta_K(s)=\pm \displaystyle\sum_{i=0}^{2n}a_is^i$, with $a_i>0$.\\

\section{Main Result}

In this section we shall confirm  Conjecture \ref{conj} for two families of 4-braid knots. We start by proving some general properties of trapezoidal polynomials.
\begin{defn}
	Let $p(s) =\sum_{k=0}^{n} a_k s^{k+r}$ for $r \in \mathbb{Z}$ be a polynomial. We say that $p$ is symmetric if  we have
	$ a_k =  a_{n-k}$   for all $0 \leq k \leq n$.
\end{defn}

\begin{defn}
	Let $p(s) =\sum_{k=0}^{n} a_k s^{k+r}$ for $r \in \mathbb{Z}$ be a polynomial. We say that $p$ is trapezoidal if for some $0 \leq i \leq j \leq n$ we have that:
	$$ a_0 < a_1 < a_2 < ... a_i = a_{i+1} = ... = a_{j} > a_{j+1} > ... > a_n.$$
\end{defn}

\begin{defn}
	Let $p(s) = \sum_{k=0}^{n} a_k s^{k+r} $ for $r \in \mathbb{Z}$ be a symmetric trapezoidal polynomial such that $a_k \neq 0$ for all $0\leq k\leq n$. Then we define the center of $p(s)$ to be $\frac{2r+n}{2}$ and the length of the stable part of $p$ to be  the largest integer $l$ such that $a_{k_0} = a_{k_0 +1} = ... = a_{k_0+l}$ for some $k_0 \leq n$.
\end{defn}

For any positive integer $n$, let  $[n]=\frac{1-s^n}{1-s}$. Note here that   we consider $[n]$  to be a trapezoidal polynomial with center at $\frac{n-1}{2}$ and length of stable part to be $n-1$.

\begin{lemm} \label{trapezoidalprod}
If $p(s) = \sum_{k=0}^{m} a_k s^{k} $ is a symmetric trapezoidal polynomial with center $\frac{m}{2}$ and length of its stable part $l$, then the product $p(s) [n]$, with $m \geq n$, is a symmetric trapezoidal polynomial with its center at $\frac{m+n-1}{2}$.
	If $l \geq n$ then the length of its stable part is $l-n+1$. Otherwise, if $l < n$ and $m+n-1$ is even (i.e the degree of the polynomial) the length is $0$, else the length is $1$.
\end{lemm}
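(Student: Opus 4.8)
The plan is to work directly with the coefficients. Writing $[n]=1+s+\cdots+s^{n-1}$ and $p(s)=\sum_{k=0}^{m}a_k s^k$, the product is $p(s)[n]=\sum_{j=0}^{m+n-1}b_j s^j$ with $b_j=\sum_{i=0}^{n-1}a_{j-i}$, under the convention $a_k=0$ for $k<0$ or $k>m$. In other words, multiplication by $[n]$ replaces each coefficient by a sliding window sum of $n$ consecutive entries of $(a_k)$. Two preliminary facts are immediate: a palindromic polynomial times a palindromic one is palindromic, so $b_j=b_{m+n-1-j}$, and the center is the sum $\frac{m}{2}+\frac{n-1}{2}=\frac{m+n-1}{2}$ of the two centers, while the degree is $m+n-1$. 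This settles the symmetry and center claims.

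The key computational observation is that the window sum telescopes: $b_j-b_{j-1}=a_j-a_{j-n}$ for every $j$. Hence the entire monotonicity behaviour of $(b_j)$ is controlled by the sign of $a_j-a_{j-n}$. To read off that sign I would record the shape of $(a_k)$ explicitly: setting $\alpha=\frac{m-l}{2}$ and $\beta=\frac{m+l}{2}$, the sequence is strictly increasing on $0\le k\le\alpha$ (with $a_k=0<a_0$ for $k<0$), constant equal to its maximum $M$ on $\alpha\le k\le\beta$, and strictly decreasing on $\beta\le k\le m$; thus it is unimodal and symmetric about $m/2$. A short ``distance to the center'' computation (comparing $|j-\frac{m}{2}|$ with $|j-n-\frac{m}{2}|$) then shows that $a_j>a_{j-n}$ precisely when $j$ is strictly closer to $m/2$ than $j-n$, that equality $a_j=a_{j-n}$ occurs only when $j$ and $j-n$ both lie in the plateau or are mirror images $j-n=m-j$, and that the difference is negative otherwise. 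The antisymmetry $a_j-a_{j-n}=-(a_{m+n-j}-a_{m-j})$, which follows from $a_k=a_{m-k}$, lets me restrict attention to $j\le\frac{m+n}{2}$ and transfer the conclusions to the second half by reflection.

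I would then split into the two cases. If $l\ge n$, the plateau is wide enough that $j$ and $j-n$ can both lie in it, and this happens exactly for $\alpha+n\le j\le\beta$, a block of $l-n+1$ consecutive indices that is symmetric about $\frac{m+n}{2}$; on this block $b_j=b_{j-1}$, before it the differences are strictly positive, and after it strictly negative. So $(b_j)$ increases, is stable over a single run of $(l-n+1)+1$ equal terms, then decreases, giving stable length $l-n+1$. If $l<n$, the plateau is too short ($\beta-\alpha=l<n$) for $j$ and $j-n$ to both land in it, so the only possible equality $a_j=a_{j-n}$ is the mirror condition $2j=m+n$. This has an integer solution exactly when $m+n$ is even, producing one index with $b_j=b_{j-1}$ and hence stable length $1$; when $m+n$ is odd (equivalently the degree $m+n-1$ is even) there is no equality at all, so $(b_j)$ strictly increases to a single peak and strictly decreases, giving stable length $0$.

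The main obstacle is the sign analysis just described: one must pin down $a_j-a_{j-n}$ carefully across the three transitions between the increasing, flat, and decreasing regions (and across the zero padding), and verify that the set of indices where the difference vanishes is a single contiguous block centered at $\frac{m+n}{2}$, so that the product has exactly one stable part rather than several disjoint ones. A useful consistency check, and part of making the two case-formulas agree along their common boundary $l\in\{n-1,n\}$, is the parity bookkeeping $l\equiv m\pmod 2$ forced by the symmetry of the plateau about $m/2$: when $l=n-1$ one has $m+n$ odd, so the ``length $0$'' branch matches $l-n+1=0$, and when $l=n$ one has $m+n$ even, so the ``length $1$'' branch matches $l-n+1=1$.
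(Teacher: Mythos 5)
Your proof is correct and takes essentially the same route as the paper's: both express the coefficients of $p(s)[n]$ as sliding window sums $c_k=\sum_{i=k-n+1}^{k}a_i$, then split into the cases $l\geq n$ and $l<n$, locating the plateau in the first case and using symmetry plus the parity of $m+n-1$ in the second. Your telescoping identity $b_j-b_{j-1}=a_j-a_{j-n}$ and the distance-to-center sign analysis simply make rigorous the monotonicity claims that the paper asserts directly from the trapezoidal shape, so this is a more careful execution of the same argument rather than a different one.
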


\begin{proof}
	The product of two symmetric polynomials is symmetric so $p(s) [n]$ is symmetric and it is clear why the center is at $\frac{m+n-1}{2}$. The rest of this proof will be spent proving that it is trapezoidal  and computing  its length.
	
	Let the coefficient of $s^k$ in $p(s) [n]$ be denoted by $c_k$, and $i_0$ be the integer such that $a_{i_0} = a_{i_0+1} = ... = a_{i_0 + l}$, then we have the following:
	$$ c_k = \sum_{i=k-n+1}^{k} a_i. $$
	
Assume that $l \geq  n$. Since $a_{i_0 + n-1} = a_{i_0+n} = ... = a_{i_0 + l}$ we get that $c_{i_0 + n-1} = c_{i_0 + n} = ... = c_{i_0 + l}$. So the length of the stable part is $l-n+1$. Moreover,  since $p(s)$ is trapezoidal, then if $0 \leq k < i_0 + n-1$ we have that $c_k < c_{k+1}$. Similarly if $ i_0 + l -1 < k < m+n-1$, then we have that $c_k > c_{k+1}$.
	
In the case  $l < n$, we notice that if $k < \frac{n+m-1}{2}$ then $c_k < c_{k+1}$ because  any $n$ consecutive coefficients of $p(s)$ would not stabilize. Since $p(s)[n]$ is symmetric, we get $c_k = c_{n+m-1-k}$. Observe that if $n+m-1$ is even then the length of our stable part is 0 (since our coefficients increase up to $c_\frac{n+m-1}{2}$ and are symmetric around it) and if it is odd then the length of our stable part is 1 since if we take $k = \frac{n+m}{2}$ we get that $c_\frac{n+m}{2} = c_{\frac{n+m}{2}-1}$. In conclusion the product  $p(s) [n]$ is always trapezoidal and the length of the stable part is as stated in the lemma.
\end{proof}

\begin{cor}
	If $p(s) = \sum_{k=0}^{m} a_k s^{k+r} $, for $r \in \mathbb{Z}$ is a symmetric trapezoidal polynomial with center $\frac{m}{2}$ and length of its stable part $l$, then $p(s) [n]$ is a symmetric trapezoidal polynomial with its center at $\frac{2r+m+n-1}{2}$.
	If $l \geq n$, then the length of its stable part is $l-n+1$. Otherwise, if $l < n$ and $m+n-1$ is even the length is $0$, else the length is $1$.
\end{cor}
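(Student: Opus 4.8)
The plan is to deduce this statement from Lemma~\ref{trapezoidalprod} by factoring out the monomial shift $s^r$, since that lemma already settles the unshifted case $r=0$. First I would set $q(s) = \sum_{k=0}^m a_k s^k$, so that $p(s) = s^r q(s)$. The crucial observation is that symmetry, the trapezoidal property, and the length of the stable part are all defined purely in terms of the coefficient sequence $(a_k)_{0\le k\le m}$ and do not see the exponent shift $r$. Hence $q$ is symmetric and trapezoidal with stable-part length $l$ precisely because $p$ is, and $q$ has center $\frac{m}{2}$.

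Next I would use that $[n]=\frac{1-s^n}{1-s}$ does not involve $r$, so that $p(s)[n] = s^r\,(q(s)[n])$. Applying Lemma~\ref{trapezoidalprod} to $q(s)$ (with the standing hypothesis $m\ge n$) then yields that $q(s)[n]$ is a symmetric trapezoidal polynomial with center $\frac{m+n-1}{2}$, whose stable part has length $l-n+1$ when $l\ge n$, length $0$ when $l<n$ and $m+n-1$ is even, and length $1$ otherwise.

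Finally I would transport these conclusions back across the multiplication by $s^r$. This operation adds $r$ to every exponent, so it leaves the coefficient sequence of $q(s)[n]$ unchanged; consequently $p(s)[n]=s^r\,(q(s)[n])$ is again symmetric and trapezoidal with the very same stable-part length, while its center is raised by $r$, giving $\frac{m+n-1}{2}+r=\frac{2r+m+n-1}{2}$, exactly as claimed. I do not expect any genuine obstacle here: all the combinatorial work was already carried out in Lemma~\ref{trapezoidalprod}, and the only points to verify are that the center transforms correctly under the shift and that the notion of stable-part length is shift-invariant, both of which are immediate from the definitions.
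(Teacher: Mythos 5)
Your proposal is correct and matches the paper's intent exactly: the paper states this corollary without proof precisely because it follows from Lemma \ref{trapezoidalprod} by the monomial-shift argument you give, noting that symmetry, trapezoidality, and stable-part length depend only on the coefficient sequence while the center shifts by $r$. Your explicit handling of the implicit hypothesis $m \geq n$ and the center computation $\frac{m+n-1}{2}+r=\frac{2r+m+n-1}{2}$ is exactly what the paper leaves to the reader.
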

Finally, it is easy to see that we have the following lemma.
\begin{lemm}\label{addsymm}
	Let $p(s) = \sum_{k=0}^{n} a_k s^{k+r_1}$ and $q(s) = \sum_{k=0}^{m} b_k s^{k+r_2}$ be two symmetric trapezoidal polynomials with length of  stable parts  $l_p$ and $ l_q$ respectively. If  both polynomials have the same center $c$, then $p(s) + q(s)$ is a symmetric trapezoidal polynomial with center $c$ and length of  stable part  $min\{l_p, l_q\}$.
\end{lemm}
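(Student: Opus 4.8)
The plan is to descend from polynomials to their coefficient sequences and argue entirely at that level, exploiting that a shared center forces the two ``shapes'' to be concentric. Write $P_j$ and $Q_j$ for the coefficient of $s^j$ in $p$ and $q$ respectively, extended by $0$ outside the supports $[r_1,r_1+n]$ and $[r_2,r_2+m]$, and set $c_j=P_j+Q_j$. Since $p$ and $q$ are symmetric with the common center $c$, I have $P_j=P_{2c-j}$ and $Q_j=Q_{2c-j}$, hence $c_j=c_{2c-j}$; this already shows that $p+q$ is symmetric with center $c$, and it lets me confine all further analysis to the left half $\{j\le c\}$ and recover the right half by reflection.

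First I would record the monotonicity input. A symmetric trapezoidal polynomial has coefficients that are non-decreasing as $j$ increases up to its center: strictly increasing until the plateau begins, then constant across the plateau, which straddles $c$; below the support the coefficient is $0$, which does not break monotonicity. Thus both $P_j$ and $Q_j$ are non-decreasing on $\{j\le c\}$, and therefore so is $c_j$. By the reflection symmetry $c_j$ is non-increasing on $\{j\ge c\}$, so $p+q$ is at worst unimodal with a plateau; the real work is to locate that plateau exactly.

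The substance is to pin down where $c_j$ is constant. On the left half a step $j\to j+1$ is flat for the sum if and only if it is flat for both $P$ and $Q$ (both are non-decreasing, so an increase in one cannot be cancelled by the other). Because the two plateaus are concentric about $c$, I may assume $l_p\le l_q$, so that the $p$-plateau, occupying the powers $[\,c-l_p/2,\;c+l_p/2\,]$, sits inside the $q$-plateau. On the left half, $P$ is flat precisely from $c-l_p/2$ onward and is \emph{strictly} increasing at every earlier step within its support; since $c-l_p/2\ge c-l_q/2$, at every step strictly left of the common plateau the longer-increasing coefficient $P$ strictly increases, forcing $c_j<c_{j+1}$, whereas on $[\,c-l_p/2,\;c\,]$ both pieces are constant and so is $c_j$. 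Reflecting, $c_j$ is strictly decreasing to the right of $c+l_p/2$, so $p+q$ is trapezoidal with stable part of length $l_p=\min\{l_p,l_q\}$.

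I expect the one genuine obstacle to be ruling out a \emph{false plateau} in the increasing region, that is, a flat step $j\to j+1$ lying strictly left of the common plateau. Such a step would need $P$ and $Q$ both flat there; for $P$ this can only mean $j,j+1$ both lie below $p$'s support (both coefficients $0$), while $Q$ is flat there only if that far-left range meets $q$'s plateau. This cannot occur once the narrower support contains the wider polynomial's plateau (equivalently $l_q\le n$ when $p$ is the narrower summand), which is exactly the configuration present in the intended applications, where the summands share a common support and the issue is vacuous. I would therefore state this containment explicitly and check it, since without it the stated conclusion can genuinely fail; granting it, everything else is routine bookkeeping with the two non-decreasing sequences.
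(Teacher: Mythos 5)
The paper gives no proof of Lemma \ref{addsymm} at all (it is prefaced by ``it is easy to see''), so yours is the only argument on the table; judged on its own, your coefficient-level analysis is correct, and the obstacle you single out is genuine rather than a fussy technicality. Indeed the lemma is \emph{false} as stated: take $p(s)=1+2s+s^{2}$ and $q(s)=s^{-3}[9]$. Both are symmetric trapezoidal with center $1$ (the paper itself regards $[n]$ as trapezoidal with stable length $n-1$), with $l_p=0$ and $l_q=8$, but $p(s)+q(s)$ has coefficient sequence $(1,1,1,2,3,2,1,1,1)$ on the powers $s^{-3},\dots,s^{5}$: it is flat and then rises, so it is not trapezoidal in the paper's sense, and its longest constant run has length $2\neq\min\{l_p,l_q\}=0$. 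This is exactly your ``false plateau'': two consecutive exponents lying below the support of $p$ but inside the plateau of $q$. So the lemma must be restated with a hypothesis of the kind you name --- each summand's plateau contained in the other summand's support --- and under that hypothesis your argument (symmetry about $c$, both coefficient sequences non-decreasing up to $c$, a flat step of the sum forcing a flat step of both summands, strictness off the common plateau) is complete. One small caveat: your monotonicity step tacitly assumes the coefficients are positive; the paper's definitions never impose this, but it holds in every use here, and it is genuinely needed for the zero-extension to be monotone.

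Where you go slightly wrong is the claim that in the intended application ``the summands share a common support'', making the issue vacuous. They do not. In the two-block theorem following Theorem \ref{case1}, with $P=p_1+p_2$, $Q=q_1+q_2$, $R=r_1+r_2$, the summand $s[q_1][q_2]\bigl([p_1][p_2][r_1+r_2]+[r_1][r_2][p_1+p_2]\bigr)$ is supported on $[1,P+Q+R-4]$, while $[P][Q][R]$ is supported on $[0,P+Q+R-3]$. What does hold is your containment condition, and it requires the small computation you ask for: writing $P'\ge Q'\ge R'$ for $P,Q,R$ in decreasing order, Theorem \ref{case1} places the plateau of $[P][Q][R]$ at $[Q'+R'-2,\,P'-1]$ when $P'\ge Q'+R'-1$, and at the one or two central positions otherwise; since $P,Q,R\ge2$, in every case this plateau lies inside $[1,P+Q+R-4]$, and the other summand's plateau lies inside its own support, hence inside $[0,P+Q+R-3]$. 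With that check added, both the corrected lemma and its application in the paper go through.
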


\begin{thm}\label{case1}
	Let $L$ be the alternating link obtained through the closure of the 4-braid $\sigma_1^p \sigma_2^{-q} \sigma_3^r $, with  $p,q,r >0$. Then its Alexander polynomial  (up to some unit in $\mathbb{Z}[s^{1/2},s^{-1/2}])$ is given by:
	$$\Delta_L(s) = [p][q][r].$$
	Moreover $\Delta_L(s)$ is trapezoidal, and the length of its stable part is $l \leq |\sigma(L)|$.
\end{thm}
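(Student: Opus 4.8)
The plan is to compute the Alexander polynomial directly from the Burau formula and then deduce the trapezoidal property from the product structure $[p][q][r]$ using the lemmas already proved. First I would evaluate $\psi_{4,t}(\sigma_1^p\sigma_2^{-q}\sigma_3^r)$ by noting that each $\psi_{4,t}(\sigma_i)$ has a simple block structure: the matrices $\psi_{4,t}(\sigma_1)$, $\psi_{4,t}(\sigma_2)$, $\psi_{4,t}(\sigma_3)$ each differ from the identity in one $2\times 2$ block, so their powers are easy to write down. Indeed $\psi_{4,t}(\sigma_1)$ acts nontrivially only in the upper-left, $\psi_{4,t}(\sigma_3)$ only in the lower-right, and powers of these follow a geometric-series pattern whose entries are exactly of the form $\frac{1-(-t)^k}{1-(-t)}=[k]$ after the substitution $s=-t$. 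I would then multiply the three matrix powers, form $I_3-\psi_{4,t}(b)$, and take its determinant. The exponent sum here is $e_b=p-q+r$, and the prefactor $(-1/\sqrt t)^{e_b-3}\frac{1-t}{1-t^4}$ is designed to cancel the spurious factors, leaving a symmetric Laurent polynomial. I expect the computation to collapse cleanly into the product $[p][q][r]$ after the substitution $s=-t$; verifying that cancellation is the main bookkeeping obstacle.

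Once the formula $\Delta_L(s)=[p][q][r]$ is established, the trapezoidal claim follows from Lemma \ref{trapezoidalprod} applied iteratively. Each factor $[n]=\frac{1-s^n}{1-s}$ is, by the remark preceding the lemma, a symmetric trapezoidal polynomial with center $\frac{n-1}{2}$ and stable length $n-1$. Without loss of generality assume $p\leq q\leq r$; I would first multiply $[r][q]$ (with $r\geq q$, matching the hypothesis $m\geq n$ of the lemma with $m=r-1$, $n=q$), obtaining a symmetric trapezoidal polynomial whose stable length is $r-q$ if $r-1\geq q$, and then multiply the result by $[p]$ to get the final stable length. The corollary lets me absorb the $s^r$-type shifts that arise from writing things as genuine polynomials rather than Laurent polynomials, so the symmetric-trapezoidal structure is preserved at each step.

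The remaining task is the numerical inequality $l\leq|\sigma(L)|$. From the Corollary at the end of Section 2 (the $m=1$ case), $\sigma(L)=q-p-r+1$, so $|\sigma(L)|=p+r-q-1$ precisely when $p+r-q-1\geq 0$. I would compute $l$ explicitly by tracking the stable lengths through the two multiplications in Lemma \ref{trapezoidalprod}: ordering the factors so that the largest exponent is consumed first, the degree of $[p][q][r]$ is $(p-1)+(q-1)+(r-1)=p+q+r-3$, and the stable length $l$ works out to a small expression in $p,q,r$ (of order $p+r-q$ up to a correction of $1$ or $2$ depending on parities and the order of multiplication). The main obstacle is a careful case analysis: the value of $l$ depends on which of the hypotheses $l\geq n$ or $l<n$ holds at each application of the lemma, and on the parity of the intermediate degrees. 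I would organize the argument by assuming $p\le q\le r$ (the general case following by symmetry of the product), compute $l$ in each parity case, and then check in each case that $l\le p+r-q-1$, which is where the sharpness of Hirasawa--Murasugi's refined bound must be confirmed rather than merely the classical trapezoidal shape.
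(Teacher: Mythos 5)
Your skeleton matches the paper's---establish $\Delta_L(s)=[p][q][r]$, get trapezoidality from Lemma \ref{trapezoidalprod}, then compare the stable length against the signature from the corollary of Section 2---but your route to the formula itself is genuinely different. The paper performs no Burau computation in this proof: it simply observes that $\mathrm{cl}(\sigma_1^p\sigma_2^{-q}\sigma_3^r)$ is the connected sum of the torus links $T(2,p)$, $T(2,-q)$ and $T(2,r)$, and invokes multiplicativity of the Alexander polynomial under connected sum together with $\Delta_{T(2,k)}=[k]$. Your direct evaluation of $\det(I_3-\psi_{4,t}(b))$ is viable---it is exactly what the paper does for the two-block braid in the following theorem, where no connected-sum structure is available---so your method is more uniform and would scale to more blocks; the paper's observation costs nothing and avoids all the bookkeeping you correctly identify as the main obstacle.

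Two corrections to your final paragraph are needed. First, your anticipated value of $l$ (``of order $p+r-q$ up to a correction of $1$ or $2$'') cannot be right: since $|\sigma(L)|=p+r-q-1$, such an $l$ would violate the very inequality you are trying to prove. Tracking Lemma \ref{trapezoidalprod} under your ordering $p\le q\le r$ gives $l=r-q-p+1$ when this is nonnegative (your own intermediate step: $[r][q]$ has stable length $r-q$, and multiplying by $[p]$ gives $r-q-p+1$), and otherwise $l\in\{0,1\}$ determined by the parity of the degree $p+q+r-3$, which agrees with the parity of $\sigma(L)$. So the inequality $l\le p+r-q-1$ holds with slack $2p-2$, and there is no sharpness analysis to carry out in this configuration. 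Second, the ``without loss of generality'' is not innocent, and here the paper shares the same blind spot: $[p][q][r]$ is symmetric in the three exponents, but $\sigma(L)=q-p-r+1$ singles out the $\sigma_2$-exponent, and the only genuine symmetry of the link is $p\leftrightarrow r$. The case excluded both by your ordering and by the paper's, namely $q>\max(p,r)$, is precisely where the bound is sharp: there $l=q-p-r+1=\sigma(L)$ exactly (e.g.\ $p=r=1$, $q=3$ gives $\Delta_L(s)=[3]$ with $l=2=\sigma(L)$), so it must be checked as a separate case rather than dismissed by symmetry of the product. A last minor point: Lemma \ref{trapezoidalprod} assumes $m\ge n$, which fails for $[r][q]$ when $r=q$ (then $m=r-1<q=n$), so equal exponents deserve a one-line remark.
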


\begin{figure}[h]
	\begin{center}
		\includegraphics[width =0.7\linewidth]{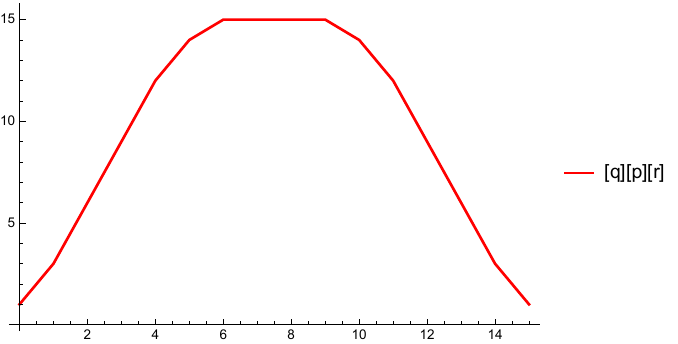}
	\end{center}
	\caption{Coefficients of $\Delta_K(s)$ plotted for a knot $K = cl(\sigma_1^p \sigma_2^{-q} \sigma_3^r)$.}
\label{Trapezoidal1}
\end{figure}
\begin{proof}
	Since the closure of the 4-braid $\sigma_1^p \sigma_2^{-q} \sigma_3^r  $ is just the connected sum of the torus links of type $(2,p)$, $(2,-q)$ and $(2,r)$ then its Alexander polynomial, up to some unit in $\mathbb{Z}[s^{1/2},-s^{1/2}]$, is given by:
	\begin{align*}
		\Delta_L(s) &= \Delta_{(2,p)}(s) \Delta_{(2,-q)}(s) \Delta_{(2,r)}(s)\\
		&= [p] [q][r].
	\end{align*}
	
	By Lemma  \ref{trapezoidalprod}, one can easily see that  $\Delta_L(s)$ is trapezoidal. Without loss of generality, we  can assume that $p \geq q \geq r$.  The length of  the
stable part of $\Delta_L(s)$ is given as follows.\\
	If $p-q-r+1 \geq 0$ then  $l = p-q-r+1$. Recall  that $|\sigma(L)| = |q-p-r+1| = p+r-q-1$. Obviously,  $p-q-r+1 \leq p+r-q-1$ since $r\geq 1$. Thus we get $l \leq |\sigma(L)|$.
	Otherwise if $p-q-r+1 < 0$ then notice that $\sigma(L)$ has the same parity as the degree of $\Delta_L(s)$ which is $p+q+r-3$. Hence,  if $\sigma(L)$ is even, then  we have $l = 0 \leq |\sigma(L)|$. Otherwise, if $\sigma(L)$ is odd then $l = 1 \leq |\sigma(L)|$.
\end{proof}
\begin{thm}
	Let $L$ be the alternating link obtained through closure of the 4-braid
	 $\sigma_1^{p_1} \sigma_2^{-q_1} \sigma_3^{r_1} \sigma_1^{p_2} \sigma_2^{-q_2} \sigma_3^{r_2}$ for $p_i,q_i,r_i >0$. Then its Alexander polynomial (up to some unit in $\mathbb{Z}[s^{1/2},s^{-1/2}])$ is given by:
	$$\Delta_L(s) = s[q_1][q_2]\left([p_1][p_2][r_1+r_2]+[r_1][r_2][p_1+p_2]\right) + [p_1+p_2][q_1+q_2][r_1+r_2].$$
	Moreover $\Delta_L(s)$ is trapezoidal, and the length of its stable part is  $l \leq |\sigma(L)|$.
\end{thm}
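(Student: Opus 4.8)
The plan is to separate the argument into a computational part, which establishes the closed formula for $\Delta_L(s)$, and a structural part, which deduces trapezoidality and the length bound from the lemmas already proved. Write $P=p_1+p_2$, $Q=q_1+q_2$, $R=r_1+r_2$ throughout.

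First I would compute $\Delta_L(s)$ directly from the reduced Burau representation via the formula $\Delta_L(t)=(-1/\sqrt{t})^{e_b-3}\frac{1-t}{1-t^4}\det(I_3-\psi_{4,t}(b))$ with $b=\sigma_1^{p_1}\sigma_2^{-q_1}\sigma_3^{r_1}\sigma_1^{p_2}\sigma_2^{-q_2}\sigma_3^{r_2}$. The essential simplification is that, after the substitution $s=-t$, the powers of the generator matrices take a clean closed form: for instance $\psi_{4,t}(\sigma_1)^{p}$ carries $[p]=\frac{1-s^p}{1-s}$ in position $(1,2)$ and $s^p$ in position $(1,1)$, with analogous expressions (involving $[q]$, $[r]$ and their negative-power analogues) for the powers of $\sigma_2$ and $\sigma_3$. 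I would record these six power-matrices, multiply them in order, subtract from $I_3$, expand the $3\times3$ determinant, and collect terms. The prefactor $\frac{1-t}{1-t^4}$ together with the power of $-1/\sqrt{t}$ is exactly the normalization that clears denominators and produces a genuine polynomial in $s$; after factoring, this should collapse (up to the usual unit) to the stated expression $s[q_1][q_2]\left([p_1][p_2][R]+[r_1][r_2][P]\right)+[P][Q][R]$.

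For the structural part I would regard $\Delta_L(s)$ as the sum of three pieces, $A=s[q_1][q_2][p_1][p_2][R]$, $B=s[q_1][q_2][r_1][r_2][P]$, and $C=[P][Q][R]$. Each piece is a shift of a product of bracket polynomials, hence symmetric, and by iterating Lemma \ref{trapezoidalprod}, ordering the factors so that the running degree always dominates the next factor (and treating the two-factor base case $[a][b]$ directly, where the $m\geq n$ hypothesis may fail), each piece is symmetric trapezoidal. A short center computation shows that all three share the common center $\frac{P+Q+R-3}{2}$: in each of $A$ and $B$ two of the three brackets $[P],[Q],[R]$ are split into products of two brackets, which lowers the center by exactly one, and the leading factor $s$ restores it. Consequently Lemma \ref{addsymm}, applied twice, shows $\Delta_L(s)=A+B+C$ is symmetric trapezoidal with stable length $\min(l_A,l_B,l_C)$.

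Finally, the length bound falls out without computing $l_A$ or $l_B$: since $\min(l_A,l_B,l_C)\leq l_C$, it suffices to bound the length of $C=[P][Q][R]$. But $[P][Q][R]$ is precisely the Alexander polynomial of the single-block braid $\sigma_1^{P}\sigma_2^{-Q}\sigma_3^{R}$ treated in Theorem \ref{case1}, whose signature is $Q-P-R+1$, and that theorem gives $l_C\leq|Q-P-R+1|$. By the Corollary to the signature theorem, $\sigma(L)=Q-P-R+1$ as well, so $|\sigma(L)|=|Q-P-R+1|$ and hence the stable length of $\Delta_L$ is at most $|\sigma(L)|$. I expect the main obstacle to be the Burau determinant computation and its algebraic reduction to the factored form, where sign- and power-bookkeeping is error-prone; by contrast the trapezoidal and length claims are essentially routine consequences of Lemmas \ref{trapezoidalprod} and \ref{addsymm} and Theorem \ref{case1}, the one genuinely clever point being the observation that the single term $C$ already controls the stable length.
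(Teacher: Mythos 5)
Your proposal is correct and takes essentially the same route as the paper: the closed formula is obtained by the same Burau determinant computation, trapezoidality follows from Lemmas \ref{trapezoidalprod} and \ref{addsymm} applied to pieces sharing the center $\frac{P+Q+R-3}{2}$ (the paper simply groups your $A+B$ as one term), and the length bound rests on the same key observation that the single term $[P][Q][R]$ controls the stable length via Theorem \ref{case1} together with the signature corollary $\sigma(L)=Q-P-R+1$.
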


Recall that  the determinant of a  link $L$ is a numerical invariant of links that is   obtained by the evaluation of the Alexander polynomial at $t=-1$; $\det(L)=|\Delta_L(-1)|$. As a consequence of the above theorem, we have:\\
\begin{cor}
	Let $L$ be the alternating link obtained through closure of the 4-braid $\sigma_1^{p_1} \sigma_2^{-q_1} \sigma_3^{r_1} \sigma_1^{p_2} \sigma_2^{-q_2} \sigma_3^{r_2} $ for $p_i,q_i,r_i >0$. Then
	$$\det(L)= q_1q_2\left(p_1p_2(r_1+r_2)+r_1r_2(p_1+p_2)\right) + (p_1+p_2)(q_1+q_2)(r_1+r_2).$$

\end{cor}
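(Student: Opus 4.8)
The plan is to obtain this corollary as a direct specialization of the preceding theorem, which gives the full Alexander polynomial $\Delta_L(s)$ in terms of the brackets $[k]$. Recall that the determinant is defined by $\det(L) = |\Delta_L(-1)|$, where the argument refers to the original variable $t$ of the Alexander polynomial. Throughout this section we have made the substitution $s = -t$, so the evaluation $t = -1$ corresponds precisely to $s = 1$. Hence the first step is to record that $\det(L) = |\Delta_L(s)|_{s=1}|$. Note also that the theorem determines $\Delta_L(s)$ only up to a unit in $\mathbb{Z}[s^{1/2}, s^{-1/2}]$, that is, up to a factor $\pm s^{k/2}$; since any such factor evaluates to $\pm 1$ at $s = 1$, it has no effect on the absolute value, and the determinant is therefore unambiguously computed from the displayed expression.

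Next I would evaluate the basic building block at $s = 1$. Writing $[n] = 1 + s + \cdots + s^{n-1}$, we immediately obtain $[n]|_{s=1} = n$. Substituting $s = 1$ into the formula of the theorem then replaces the leading factor $s$ by $1$ and each bracket $[k]$ by the corresponding integer $k$, yielding
$$\Delta_L(s)|_{s=1} = q_1 q_2\bigl(p_1 p_2 (r_1 + r_2) + r_1 r_2 (p_1 + p_2)\bigr) + (p_1 + p_2)(q_1 + q_2)(r_1 + r_2).$$
This is exactly the right-hand side claimed for $\det(L)$.

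Finally, since $p_i, q_i, r_i$ are all positive integers, the resulting quantity is a sum of products of positive integers and is therefore strictly positive. Consequently, taking the absolute value leaves it unchanged, and we conclude that $\det(L) = |\Delta_L(s)|_{s=1}|$ equals the stated formula. The only point requiring any care is the bookkeeping of the substitution $s = -t$, which places the determinant evaluation at $s = 1$ rather than at $s = -1$; the remainder is a routine specialization of the theorem, with no genuine obstacle.
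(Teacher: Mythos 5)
Your proposal is correct and matches the paper's intent exactly: the paper states this corollary as an immediate consequence of the preceding theorem, obtained by evaluating $\Delta_L(s)$ at $s=1$ (corresponding to $t=-1$ under the substitution $s=-t$), with $[n]|_{s=1}=n$. Your added care about the unit ambiguity $\pm s^{k/2}$ and the positivity of the result, while left implicit in the paper, is sound and completes the routine verification.
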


\begin{proof}
	Let $\beta = \sigma_1^{p_1} \sigma_2^{-q_1} \sigma_3^{r_1} \sigma_1^{p_2} \sigma_2^{-q_2} \sigma_3^{r_2}.$
	
	For any positive integers $p_i, q_i$ and $r_i$, one can easily see that we have:
	
	$\psi_{4,s}(\sigma_1 ^{p_i})$ = $\begin{pmatrix}
		s^{p_i} & [p_i]  & 0 \\
		0 & 1 & 0 \\
		0 & 0 & 1
	\end{pmatrix}$,

	$\psi_{4,s}(\sigma_2 ^{-q_i})$ = $\begin{pmatrix}
		1 & 0 & 0 \\
		s^{1-q_i}[q_i] & s^{-q_i} & -s^{-q_i}[q_i] \\
		0 & 0 & 1
	\end{pmatrix}$, and

	$\psi_{4,s}(\sigma_3 ^{r_i})$ = $\begin{pmatrix}
		1 & 0 & 0 \\
		0 & 1 & 0 \\
		0 & -s[r_i] & s^{r_i}
	\end{pmatrix}.$
	
	Moreover, elementary matrix computations  show that we have:
	\begin{align*}
		det(I_3 - \psi_{4,s}(\beta)) &= \frac{s^{-q_1-q_2}}{(s-1)^5} \left(1-s+s^2-s^3\right) (s^{p_1+1}+s^{p_2+1}+s^{p_1+p_2}-2 s^{p_1+p_2+1}+s^{p_1+p_2+2}\\
		&+2 s^{q_1+1} -s^{p_1+q_1+1}-s^{p_2+q_1+1}+2 s^{q_2+1}-s^{p_1+q_2+1}-s^{p_2+q_2+1}+s^{q_1+q_2}\\
		&-4s^{q_1+q_2+1}+s^{q_1+q_2+2}+s^{p_1+q_1+q_2+1}+s^{p_2+q_1+q_2+1}-s^{p_1+p_2+q_1+q_2}\\
		&+2s^{p_1+p_2+q_1+q_2+1}-s^{p_1+p_2+q_1+q_2+2}+s^{r_1+1}-s^{p_1+p_2+r_1+1}-s^{q_1+r_1+1}\\
		&+s^{p_1+p_2+q_1+r_1+1}-s^{q_2+r_1+1}+s^{p_1+p_2+q_2+r_1+1}+s^{q_1+q_2+r_1+1}\\
		&-s^{p_1+p_2+q_1+q_2+r_1+1}+s^{r_2+1}-s^{p_1+p_2+r_2+1}-s^{q_1+r_2+1}+s^{p_1+p_2+q_1+r_2+1}\\
		&-s^{q_2+r_2+1}+s^{p_1+p_2+q_2+r_2+1}+s^{q_1+q_2+r_2+1}-s^{p_1+p_2+q_1+q_2+r_2+1}+s^{r_1+r_2}\\
		&-2 s^{r_1+r_2+1}+s^{r_1+r_2+2}-s^{p_1+r_1+r_2+1}-s^{p_2+r_1+r_2+1}\\
		&-s^{p_1+p_2+r_1+r_2}+4s^{p_1+p_2+r_1+r_2+1}-s^{p_1+p_2+r_1+r_2+2}+s^{p_1+q_1+r_1+r_2+1}\\
		&+s^{p_2+q_1+r_1+r_2+1}-2s^{p_1+p_2+q_1+r_1+r_2+1}+s^{p_1+q_2+r_1+r_2+1}+s^{p_2+q_2+r_1+r_2+1}\\
		&-2s^{p_1+p_2+q_2+r_1+r_2+1}-s^{q_1+q_2+r_1+r_2}+2s^{q_1+q_2+r_1+r_2+1}-s^{q_1+q_2+r_1+r_2+2}\\
		&-s^{p_1+q_1+q_2+r_1+r_2+1}-s^{p_2+q_1+q_2+r_1+r_2+1}+s^{p_1+p_2+q_1+q_2+r_1+r_2}\\
		&+s^{p_1+p_2+q_1+q_2+r_1+r_2+2}-s^2-1 ).
	\end{align*}
	
	By collecting the terms and simplifying we  get:
	\begin{align*}
		det(I_3 - \psi_{4,s}(\beta)) 	&= s^{-q_1-q_2} \left(1-s+s^2-s^3\right) (s[q_1][q_2] ([p_1][p_2][r_1+r_2] + [r_1][r_2][p_1+p_2]) \\
		&+ [p_1+p_2][r_1+r_2][q_1 + q_2]).
	\end{align*}
	
	Allowing us to obtain the  Alexander polynomial of $L$ up to some unit in $\mathbb{Z}[s^{1/2},s^{-1/2}]$. Indeed,  dividing
$det(I_3 - \psi_{4,s}(\beta))$ by $\frac{1-s^4}{1+s} = 1-s+s^2-s^3$, we get:
	$$\Delta_L(s) = s^{-q_1-q_2} \left(s[q_1][q_2] \left(\ [p_1][p_2][r_1+r_2] + [r_1][r_2][p_1+p_2]\ \right)+ [p_1+p_2][r_1+r_2][q_1 + q_2]\right).$$
	Notice that $s[q_1][q_2] \left(\ [p_1][p_2][r_1+r_2] + [r_1][r_2][p_1+p_2]\ \right)$ is a trapezoidal symmetric polynomial with center $\frac{q_1+q_2+p_1+p_2+r_1+r_2-3}{2}$. Likewise $[p_1+p_2][r_1+r_2][q_1 + q_2]$ is a trapezoidal symmetric polynomial with the same center. Therefore by Lemma  \ref{addsymm}, $\Delta_L(s)$ is a trapezoidal symmetric polynomial, see the illustration in Figure \ref{Trapezoidal1} and Figure \ref{Trapezoidal2}.
	
To complete the proof, it will  be enough to show that $[p_1+p_2][r_1+r_2][q_1 + q_2]$ has  length of  stable part  less than or equal $|\sigma(L)|$. By letting $P = p_1 +p_2, R=r_1+r_2$ and  $Q = q_1+q_2$, we have  $\sigma(L) = Q-P-R+1$. This is exactly the case we  proved in Theorem \ref{case1}. By   Lemma  \ref{addsymm},  the length of the stable part
of $\Delta_L(s)$ is the  minimum between the lengths of the stable parts of the two terms. Since the term  $[p_1+p_2][r_1+r_2][q_1 + q_2]$ has the length of  stable part less than or equal to $ |\sigma(L)|$, we conclude that    $l \leq |\sigma(L)|$.
\end{proof}

\begin{figure}[h]
	\begin{center}
		\includegraphics[width =\linewidth]{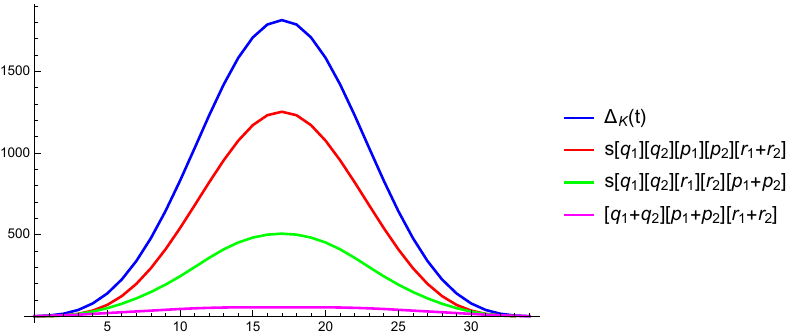}
	\end{center}
	\caption{Coefficients of $\Delta_K(s)$ plotted for a knot $K = cl(\sigma_1^{p_1} \sigma_2^{-q_1} \sigma_3^{r_1} \sigma_1^{p_2} \sigma_2^{-q_2} \sigma_3^{r_2})$.}
\label{Trapezoidal2}
\end{figure}
\section{The first coefficients of the Alexander polynomial}
In this section, we shall use the  Burau representation to give explicit  formulas for the first 4 coefficients of the Alexander polynomial
of some classes of  alternating  $n$-braid links.
\begin{thm}
	Let $L$ be the link obtained through the closure of the 4-braid on $m$ blocks $\sigma_{1}^{p_1} \sigma_2^{-q_1} \sigma_{3}^{r_1}... \sigma_{1}^{p_m} \sigma_2^{-q_m} \sigma_{3}^{r_m}$. Then  $\Delta_L(s)=\pm \displaystyle\sum_{i=0}^{2n}a_is^i$, where:
	\begin{enumerate}
		\item if $p_i, q_i, r_i > 1$, then  $a_0 = 1$ and  $a_1 = 2m+1$,
		\item if $p_i, q_i, r_i > 2$, then $a_2 = 2m^2 +4m$,
		\item if $p_i, q_i, r_i > 3$, then $a_3 = \frac{2}{3} m \left(2 m^2+9 m+4\right)$.
	\end{enumerate}
\end{thm}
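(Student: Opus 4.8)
The plan is to run the braid $\beta=\prod_{i=1}^m \sigma_1^{p_i}\sigma_2^{-q_i}\sigma_3^{r_i}$ through the reduced Burau representation and extract the four lowest coefficients of $\Delta_L$ by working modulo $s^4$. Writing $\psi=\psi_{4,s}(\beta)$ and using the three one-generator matrices recorded in the previous corollary, each block $\psi_{4,s}(\sigma_1^{p_i}\sigma_2^{-q_i}\sigma_3^{r_i})$ carries an overall factor $s^{-q_i}$, so I would write $\psi=s^{-Q}N$ with $Q=\sum q_i$ and $N$ a matrix with polynomial entries. The expansion $\det(I_3-\psi)=1-s^{-Q}\operatorname{tr}(N)+s^{-2Q}e_2(N)-\det(\psi)$, together with $\det(\psi)=s^{P-Q+R}$ (here $P=\sum p_i$, $R=\sum r_i$) and the $3\times 3$ identity $s^{-2Q}e_2(N)=e_2(\psi)=\det(\psi)\operatorname{tr}(\psi^{-1})=s^{P-Q+R}\operatorname{tr}(\psi^{-1})$, reduces everything to the lowest-order behaviour of $\operatorname{tr}(N)$ and of $\operatorname{tr}(\psi^{-1})$. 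Since $\Delta_L=(\text{unit})\cdot\frac{1}{1-s+s^2-s^3}\det(I_3-\psi)$ and the unit is $\pm s^{-Q}$, the coefficients $a_0,\dots,a_3$ are governed by the four lowest powers $s^{-Q},\dots,s^{-Q+3}$ of $\det(I_3-\psi)$.

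The normalization is the delicate point. Counting orders under the hypotheses, the constant $1$ sits at $s^0$ and the $\det(\psi)$ term at $s^{P-Q+R}=s^{-Q}\cdot s^{P+R}$, and both are strictly above $s^{-Q+3}$ because $P+R>3$; hence only $-s^{-Q}\operatorname{tr}(N)$ and $e_2(\psi)=s^{-Q}\cdot s^{P+R}\operatorname{tr}(\psi^{-1})$ can reach the relevant window. I would then establish the key reduction by two parallel computations modulo $s^4$. First, the rescaled forward block $s^{q_i}\psi_{4,s}(\sigma_1^{p_i}\sigma_2^{-q_i}\sigma_3^{r_i})$ reduces to $\operatorname{blockdiag}(M,0)$ with $M=\left(\begin{smallmatrix} sG^2 & G+sG^3\\ sG & 1+sG^2\end{smallmatrix}\right)$, where $G=\tfrac{1}{1-s}$, so $N\equiv\operatorname{blockdiag}(M^m,0)$ and $\operatorname{tr}(N)\equiv\operatorname{tr}(M^m)\pmod{s^4}$. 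Second, the rescaled inverse block $s^{p_i+r_i}\psi_{4,s}(\sigma_3^{-r_i}\sigma_2^{q_i}\sigma_1^{-p_i})$ reduces to a matrix supported on its bottom row with vanishing diagonal; such matrices multiply to zero modulo $s^4$, which forces $s^{P+R}\operatorname{tr}(\psi^{-1})\equiv 0\pmod{s^4}$ and thereby deletes the $e_2$ term from the bottom four coefficients. I expect this second reduction to be the main obstacle: it is precisely the place where the $s^{-Q}$ normalization threatens to inject spurious low-order contributions, and only the explicit inverse-block computation (the $m=1$ case of Theorem \ref{case1}, where $\Delta_L=[p][q][r]$, is a useful consistency check) shows they cancel.

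To finish, note that $\det M\equiv 0\pmod{s^4}$, so Cayley--Hamilton gives $M^2\equiv(\operatorname{tr}M)M$ and hence $\operatorname{tr}(M^m)\equiv(\operatorname{tr}M)^m=(1+2sG^2)^m\pmod{s^4}$. Assembling, $\det(I_3-\psi)\equiv -s^{-Q}(1+2sG^2)^m$ across its four lowest powers, so after absorbing the unit and using $\frac{1}{1-s+s^2-s^3}\equiv 1+s\pmod{s^4}$ I get $\Delta_L\equiv(1+2sG^2)^m(1+s)\pmod{s^4}$. Expanding $2sG^2\equiv 2s+4s^2+6s^3$ and multiplying $(1+2s+4s^2+6s^3)^m$ by $(1+s)$ then yields $a_0=1$, $a_1=2m+1$, $a_2=2m^2+4m$, and $a_3=\tfrac23 m(2m^2+9m+4)$. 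Finally, the graded hypotheses $p_i,q_i,r_i>1,>2,>3$ are exactly what make $[p_i]\equiv[q_i]\equiv[r_i]\equiv G$ and the monomials $s^{p_i},s^{q_i},s^{r_i}$ negligible modulo $s^2,s^3,s^4$ respectively (and guarantee the inverse-block diagonal vanishes to the needed order), so each coefficient holds under precisely its stated bound.
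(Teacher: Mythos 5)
Your proposal is correct, and its skeleton is the same as the paper's: run the braid through the reduced Burau representation, pull out the factor $s^{-q_i}$ from each block, reduce each rescaled block modulo $s^{4}$ (where all blocks become the same matrix with a $2\times 2$ upper-left core and zero third row and column), reduce the problem to the trace of the $m$-th power of that core, and multiply by $1+s$ at the end. The execution of the determinant extraction, however, is genuinely different, and yours is the more rigorous of the two. The paper replaces the product of blocks by its mod-$s^{k}$ truncation, computes $\det(s^{Q}I_{3}-s^{Q}\psi)$ of the truncated matrix via the explicit eigenvalues $\frac{1}{2}\left(1+2s\pm\sqrt{1+4s}\right)$, and then silently discards the cross term coming from the product of eigenvalues, i.e.\ the contribution of $e_{2}$; but a congruence modulo $s^{k}$ does not by itself control the relevant coefficient window $[s^{2Q},s^{2Q+k-1}]$, and the discarded term (which is $\pm s^{Q+2m}$ in the mod-$s^{2}$ case) sits \emph{at or below} that window whenever $Q\leq 2m+1$ --- for instance when every $q_{i}$ equals $2$, where it lands exactly at $s^{2Q}$ and would corrupt $a_{0}$ if it were really there. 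Your decomposition $\det(I_{3}-\psi)=1-\operatorname{tr}(\psi)+e_{2}(\psi)-\det(\psi)$, together with the identity $e_{2}(\psi)=\det(\psi)\operatorname{tr}(\psi^{-1})$ and the computation that the rescaled inverse blocks $s^{p_i+r_i}\psi_{4,s}(\sigma_3^{-r_i}\sigma_2^{q_i}\sigma_1^{-p_i})$ are bottom-row supported with zero diagonal (so their product has zero trace modulo $s^{4}$), is exactly the justification the paper's argument is missing: it proves that $e_{2}(\psi)$ contributes nothing to the four lowest powers, which is what legitimizes dropping that term. Your use of Cayley--Hamilton on the core $M$ (whose determinant vanishes identically in the power-series form with $G=\frac{1}{1-s}$) to get $\operatorname{tr}(M^{m})=(\operatorname{tr}M)^{m}$ also replaces the paper's square-root eigenvalue manipulations by a cleaner algebraic step. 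In short: the paper's route is shorter but glosses over the one delicate cancellation; your route identifies that cancellation as the main obstacle and closes it.
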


\begin{proof}
	Let $\alpha_m$ denote  the 4-braid on $m$ blocks
 $\alpha_m = \sigma_{1}^{p_1} \sigma_2^{-q_1} \sigma_{3}^{r_1}... \sigma_{1}^{p_m} \sigma_2^{-q_m} \sigma_{3}^{r_m}$.
	
	Elementary computations show  that for all $1\leq i \leq m$ we have:
	$$\Psi_{4,s}(\sigma_{1}^{p_i} \sigma_2^{-q_i} \sigma_{3}^{r_i}) = s^{-q_i} \begin{pmatrix}
		s^{p_i+q_i}+[p_i] [q_i] s & [p_i] [q_i] [r_i] s+[p_i]  & [p_i] [q_i] \left(-s^{r_i}\right) \\
		[q_i] s & [q_i] [r_i] s+1 & [q_i] \left(-s^{r_i}\right) \\
		0 & [r_i] \left(-s^{q_i+1}\right) & s^{q_i+r_i}
	\end{pmatrix}.$$

	It easy to see  that the first four coefficients of $s^Q \det(I_3 - \psi_{4,s}(\alpha_m)) (1+s)$, where $Q = \sum_{i=1}^{n} q_i$, are equal to the first four coefficients of the Alexander polynomial of ${\rm cl}(\alpha_m)$. Let us first compute   $\det(s^Q I_3 - s^Q\psi_{4,s}(\alpha_m))$. It is clear   that if $p_i, q_i, r_i > 1$, then taking the matrix  entries modulo $s^2$ we get:
	$$\Psi_{4,s}(\sigma_{1}^{p_i} \sigma_2^{-q_i} \sigma_{3}^{r_i}) \equiv s^{-q_i} \begin{pmatrix}
		s & 1 + 2 s  & 0 \\
		s & 1 + s & 0 \\
		0 & 0 & 0
	\end{pmatrix}.$$
	
	Thus, modulo $s^2$, we have:
	
	\begin{align*}
		\det( s^QI_3 - s^Q \Psi_{4,s}(\alpha_m)) &\equiv \det\left(\begin{pmatrix}
			s^Q & 0  & 0 \\
			0 & s^Q & 0 \\
			0 & 0 & s^Q
		\end{pmatrix} - \begin{pmatrix}
		s & 1 + 2 s  & 0 \\
		s & 1 + s & 0 \\
		0 & 0 & 0
	\end{pmatrix}^m\right) \\
		&\equiv 4^{-m} s^Q \left(2^m s^Q-\left(-\sqrt{1+4s}+2 s+1\right)^m\right) \left(2^m s^Q-\left(\sqrt{1+4s}+2 s+1\right)^m\right)\\
		&\equiv -2^{-m}s^{2 Q}\left( \left(2 s+1-\sqrt{1+4s}\right)^m + \left(2 s+1+\sqrt{1+4s}\right)^m \right).
	\end{align*}
	
	Computing $\left(2 s+1-\sqrt{1+4s}\right)^m + \left(2 s+1+\sqrt{1+4s}\right)^m$ modulo $s^2$ we get that:
	$$\left(2 s+1-\sqrt{1+4s}\right)^m + \left(2 s+1+\sqrt{1+4s}\right)^m \equiv 2^m (1+2ms).$$
	
	Thus, since $s^Q \det(I_3 - \psi_{4,s}(\alpha_m)) = s^{-2Q}\det( s^QI_3 - s^Q \Psi_{4,s}(\alpha_m))$, we conclude  that  the first two coefficients of the Alexander polynomial of the closure of $\alpha_m$ are   $a_0 = 1$ and $ a_1 = 2m+1$.
	
	Similarly, if   $p_i, q_i, r_i > 2$, then taking the matrix modulo $s^3$ we get that:
	$$\det( s^QI_3 - s^Q \Psi_{4,s}(\alpha_m)) \equiv s^{2Q} (1+2ms + (2m^2+2m)s^2 ),$$
which gives $a_2=2m^2+4m$. Furthermore, taking $p_i, q_i, r_i > 3$, then considering the matrix  modulo $s^4$ we get that:
	$$\det( s^QI_3 - s^Q \Psi_{4,s}(\alpha_m)) \equiv s^{2Q} (1+2ms + (2m^2+2m)s^2 + \frac{2}{3} (2 m^3+6 m^2+m)s^3).$$ Hence,
 $a_3=\frac{2}{3} m \left(2 m^2+9 m+4\right)$.
\end{proof}

\begin{rem}
\begin{enumerate}
\item The sequence made up of the  first 4 coefficients above  is clearly logarithmic concave. Indeed, one can easily verify that
$a_1^2\geq |a_0||a_2|$ and that $a_2^2\geq |a_1||a_3|$.\\
\item Similar computations carried on  alternating braids with $n$ strands and $m$ blocks  show that the first coefficients of the Alexander polynomial of the closure of the braid
$$\beta_{n,m}((p_{i,j}))=\Pi_{j=1}^m \left( \sigma_1^{p_{1,j}} \sigma_2^{-p_{2,j}} \sigma_3^{p_{3,j}} ... \sigma_{n-1}^{(-1)^{n-2} p_{{n-1},j}} \right),$$
are given, for large values of $p_{i,j}$,  by the following formulas:\\
$a_0=1$, $a_1=(n-2)m+1$, $a_2=\displaystyle\frac{(n-2)^2m^2+(3(n-2)+2)m}{2}$ and for $ n>3$ we have $a_3=\displaystyle\frac{(n-2)^3m^3+6(n-1)(n-2)m^2+(5(n-1)+1))m}{6}$.
For $n=3$, using the computations on the Jones polynomials in \cite{Ch2022}, we get $a_3=\displaystyle\frac{m^3+12m^2+17m-6}{6}$.\\
Once again, it can be verified that the sequence made up of these 4 coefficients is log-concave. Indeed, for $n\geq 4$, we have:

$$\begin{array}{rl}
a_1^2- |a_0||a_2|=&\displaystyle\frac{2 + m ( n-4) + m^2 ( n-2)^2 }{2}\geq 0, \mbox{ and } \\
a_2^2 -|a_1||a_3|=&
\displaystyle\frac{m (4 m^2 ( n-2)^3 + m^3 ( n-2)^4 - 10 n +
   m (5 n^2-16n+24))}{12}\geq 0.
   \end{array}$$
\end{enumerate}
\end{rem}

\section*{Acknowledgments}
 This research was funded  by  United Arab Emirates University, UPAR grant
 $\# G00004167.$

\end{document}